\newlist{steps}{enumerate}{1}
\setlist[steps, 1]{label = Step \arabic*:}
\newenvironment{lesn}{\begin{linenomath}\begin{esn}}{\end{esn}\end{linenomath}}
 \theoremstyle{plain}
\newtheorem{theorem}{Theorem}
\newtheorem{lemma}[theorem]{Lemma}
\newtheorem{proposition}[theorem]{Proposition}
\theoremstyle{definition}
\newtheorem*{definition}{Definition}
\newtheorem*{remark}{Remark}
\title[]{Dini derivatives for exchangeable increment processes and applications}
\author{Osvaldo Angtuncio}
\address[]{}
\author{Ger\'onimo Uribe Bravo}
\address[OA and GUB]{Instituto de Matem\'aticas\\ 
Universidad Nacional Aut\'onoma de M\'exico\\
\'Area de la Investigaci\'on Cient\'ifica, Circuito Exterior, Ciudad Universitaria\\ Coyoac\'an, 04510. Ciudad de M\'exico, M\'exico }
\address[GUB]{ 
LaSoL UMI No. 2001\\ 
CNRS-CONACYT-UNAM\\ Mexico}
\DeclareMathOperator{\id}{Id} %
\newcommand{\defin}[1]{{\bf #1}}
\newcommand{\mc}[1]{\ensuremath{\mathscr{#1}}}
\newcommand{\bb}[1]{\mathbb{#1}}
\newcommand{\fun}[3]{\ensuremath{#1:#2\to #3}}
\newcommand{\set}[1]{\ensuremath{\left\{ #1\right\} }}
\newcommand{\paren}[1]{\ensuremath{\left( #1\right) }}
\newcommand{\bra}[1]{\ensuremath{\left[ #1\right] }}
\newcommand{\sip}{\bb{P}}
\newcommand{\cadlag}{c\`adl\`ag}
\newcommand{\se}{\ensuremath{\bb{E}}}
\newcommand{\ssa}{\ensuremath{\mathscr{F}}}
\newcommand{\si}{{\ensuremath{\bf{1}}}}
\newcommand{\sa}{\ensuremath{\sigma}\nbd field}
\newcommand{\eps}{\ensuremath{ \varepsilon}}
\newcommand{\na}{\ensuremath{\mathbb{N}}}
\newcommand{\re}{\ensuremath{\mathbb{R}}}
\newcommand{\proba}[1]{\ensuremath{\sip\! \left( #1 \right)}}
\newcommand{\probac}[2]{\ensuremath{\sip\! \left( #1 \, | #2 \right)}}
\newcommand{\esp}[1]{\ensuremath{\se\! \left( #1 \right)}}
\newcommand{\var}[1]{\ensuremath{\text{Var}\! \left( #1 \right)}}
\newcommand{\abs}[1]{\hspace{.25mm}\left|#1\right|\hspace{.25mm}}
\newcommand{\F}{\ssa}
\newcommand{\G}{\ensuremath{\mc{G}}}
\newcommand{\indi}[1]{\si_{#1}}
\newcommand{\p}{\ensuremath{ \sip  } }
\newcommand{\q}{\ensuremath{ \bb{Q}  } }
\newcommand{\ofp}{\ensuremath{ \paren{ \Omega ,\F ,\p } } }
\newcommand{\sag}[1]{\sigma\!\paren{#1}}
\newcommand{\nbd}{\nobreakdash -}
\newcommand{\imf}[2]{\ensuremath{#1\!\paren{#2}}}
\newenvironment{esn}{\begin{equation*}}{\end{equation*}}
\renewcommand{\sa}{$\sigma$\nbd \'algebra}
\subjclass[2010]{
60G09
, 60G17
}
\thanks{
Research 
supported by
CoNaCyT
grant FC-2016-1946 and UNAM-DGAPA-PAPIIT grant IN115217.  
} 
\begin{document}
\begin{abstract}
Let $X$ be an exchangeable increment (EI) process whose sample paths are of infinite variation. 
We prove that, for any fixed $t$ almost surely,
\begin{esn}
\limsup_{h\to 0 \pm} \paren{X_{t+h}-X_t}/h=\infty 
\quad\text{and}\quad \liminf_{h\to 0\pm} \paren{X_{t+h}-X_t}/h=-\infty. 
\end{esn}This extends a celebrated result of Rogozin for L\'evy processes obtained in \cite{MR0242261}, 
and completes the known picture for finite-variation EI processes. 
Applications are numerous. 
For example, 
we deduce that both half-lines $(-\infty, 0)$ and $(0,\infty)$ are visited immediately for infinite variation EI processes (called upward and downward regularity). 
We also generalize the zero-one law of Millar for L\'evy processes 
by showing continuity of $X$ when it reaches its minimum in the infinite variation EI case (cf. \cite{MR0433606}); 
an analogous result for all EI processes links right and left continuity at the minimum with upward and downward regularity. 
We also consider results of Durrett, Iglehart and Miller  
on the weak convergence of conditioned Brownian bridges to the normalized Brownian excursion 
considered in \cite{MR0436353} 
and broadened to a subclass of L\'evy processes and  EI processes in \cite{MR3160578,MR3558138}. 
We prove it here for all infinite variation EI processes. 
We furthermore obtain a description of the convex minorant known for L\'evy processes 
found in  \cite{MR2978134} and extended to non-piecewise linear EI processes. 
Our main tool to study the Dini derivatives is a change of measure for EI processes which extends the Esscher transform for L\'evy processes. 
\end{abstract}
\maketitle

\section{Statement of the results}
\label{resultsSection,whichIsIntroduction}
Undoubtedly, L\'evy Processes are one of the most studied classes of stochastic processes. 
A less known class which contains them is that of Exchangeable Increment (EI) processes 
considered in general by Kallenberg in \cite{MR0394842}. 
\begin{definition}
A continuous time \cadlag\ $\re$-valued stochastic process $X=\paren{X_t, t\in [0,1]}$ 
has \emph{exchangeable increments} if for every $n\geq 1$, 
the random variables
\begin{align*}
X_{1/n},X_{2/n}-X_{1/n},\ldots, X_1-X_{(n-1)/n}
\end{align*}are exchangeable. 
\end{definition}
Clearly, all L\'evy Processes are EI since iid random variables are exchangeable. 
Therefore, one can inherit  results for L\'evy processes 
from their counterparts for EI processes, as we illustrate in this paper. 
However, conditioning a L\'evy process $X$ by its final value 
(to obtain the so called L\'evy bridges as in \cite{MR2789508} and \cite{MR3160578}) 
or  considering $(X_t-tX_1,t\leq 1)$ also yield non-L\'evy EI processes, 
so that our results can be applied more broadly.

Also, the analysis of EI processes is sometimes aided by simple combinatorial considerations. 
Indeed, for random walks, the combinatorial considerations of \cite{MR0079851} lead to a more thorough understanding of the Fluctuation Theory (study of extremes) of random walks and L\'evy processes, and in particular of the celebrated arcsine law for symmetric random walks and L\'evy processes; 
it also reobtains the following formula of \cite{MR0062867}
\begin{equation}
\label{equation:SpitzerExpectationOfMaximum}
\esp{\max_{0\leq k\leq n} X_{k/n}}=\sum_{k=1}^n \frac{1}{k}\esp{X_{k/n}^+}. 
\end{equation}
More recently, \cite{MR2825583} introduced a bijection on permutations which ultimately lead to a description of the convex minorant of a (discrete time) EI process and reinterprets the fluctuation theory of random walks. 
The Kac-Spitzer identity just displayed is interpreted as the equality in law
\begin{equation}
\label{equation:distributionalEqualityForMaximum}
\max_{0\leq k\leq n} X_{k/n}\stackrel{d}{=} \sum_{i=1}^{K_n} \bra{X_{S_i/n}-X_{S_{i-1}/n}}^+
\end{equation}where $0=S_0<S_1< \cdots< S_{K_n}=n$ is the partition obtained from a uniform stick breaking process on $\set{1,\ldots n}$ independent of $X$. 
The link with the typical fluctuation theory (of random walks and L\'evy processes) comes from considering a random $n$ independent of $X$ and geometrically distributed. 
The partition is then seen to arise from a Poisson point process and the right hand side becomes a compound Poisson distribution in the Random Walk or L\'evy process case; cf. Theorem 4 in \cite{MR2825583}. 
The description of the convex minorant for discrete time EI processes is used here to prove an analogous theorem for continuous time EI processes. 
The multidimensional case is much less studied, but   the combinatorial lemma of \cite{MR0156261}, from which one obtains the expected characteristics of the convex hull of (2D) random walks (like perimeter length or area) 
has been extended in various directions (and dimensions!) including \cite{MR3782474, MR3709116,MR3678504,MR3852455}. 
Still in the realm of  fluctuation theory, \cite{MR1232850} constructs (one-dimensional) random walks conditioned to stay positive through a bijection on permutations; this result is used here to study continuity of an EI process when it reaches its minimum. 
Away from random walks, (discrete time) EI processes of a particular type are associated to trees (with a given degree distribution) in  \cite{MR3188597} and combinatorial considerations give information on this probabilistic model. 

%

Kallenberg obtained in \cite{MR0394842} the following representation of EI processes $X$: 
there exist random variables $\alpha$, $\beta=\paren{\beta_i, i\in\na}$, and $\sigma\geq 0$ 
which are independent of an iid sequence of uniform random variables  $\paren{U_i,i\geq 1}$, 
and of a Brownian bridge $b$, 
such that
\begin{esn}
X_t=\alpha t+\sigma b_t+\sum_{i\geq 1} \beta_i\bra{\indi{U_i\leq t}-t}. 
\end{esn}When $\alpha, \beta$ and $\sigma$ are deterministic, 
the EI process $X$ is termed \emph{extremal}.
All EI processes are therefore mixtures of extremal EI processes 
and we say that $X$  has canonical parameters $(\alpha, \sigma,\beta)$. 
\begin{remark}
Our results are stated for extremal processes. 
They can be generalized by conditioning on the parameters, on the set where these satisfy the given hypotheses. 
\end{remark}

The sample paths of an extremal EI process $X$ are of infinite variation if and only if 
\begin{description}
\item[Infinite variation] either $\sigma>0$ or $\sum\abs{\beta_i}=\infty$. 
\end{description}

Our first result is the following:
\begin{theorem}
\label{RogozinForEITheorem}
Let $X$ be an extremal EI process of infinite variation. 
Then, for any fixed $t$ almost surely,
\begin{equation} 
\label{Eq:InfinitudeOfDiniDerivatives}
\limsup_{h\to 0 } \frac{X_{t+h}-X_t}{h}=\infty
\quad\text{and}\quad
\liminf_{h\to 0} \frac{X_{t+h}-X_t}{h}=-\infty
\end{equation}both from the left and from the right. 
\end{theorem}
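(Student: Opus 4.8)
The plan is to reduce the general infinite-variation EI case to the Lévy case by a change of measure, following the strategy outlined in the abstract. First I would handle the Gaussian part: if $\sigma>0$, then $X$ contains a Brownian bridge term $\sigma b_t$, and since a Brownian bridge (like Brownian motion) has infinite Dini derivatives at every fixed $t$ almost surely by the classical law of the iterated logarithm, and the remaining part $\alpha t + \sum_i \beta_i(\indi{U_i\le t}-t)$ is, conditionally on the $U_i$'s near $t$, of bounded variation in a neighborhood of $t$ with overwhelming probability (for fixed $t$, only finitely many $U_i$ land in a small interval around $t$ a.s.), the oscillation of the Brownian part dominates. So the substantive case is $\sigma=0$ and $\sum_i \abs{\beta_i}=\infty$.

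Second, in the case $\sigma=0$, $\sum\abs{\beta_i}=\infty$, I would exploit the exchangeability structure: by the Kallenberg representation with $U_i$ i.i.d.\ uniform, the law of $X$ is, up to the deterministic drift and the ``$-t$'' compensation, governed by throwing the atoms $\beta_i$ at i.i.d.\ uniform locations. The key tool announced is the Esscher-type change of measure for EI processes; I would use it to tilt the law of $X$ so that, near the fixed time $t$, the process looks (locally, after the change of measure) like an infinite-variation Lévy process, or more precisely so that one can invoke Rogozin's theorem \cite{MR0242261} for the tilted process and then transfer the conclusion back since the change of measure is mutually absolutely continuous on $\F_{t+\eps}$ for small $\eps$. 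Concretely, the Dini-derivative statement \eqref{Eq:InfinitudeOfDiniDerivatives} is an event measurable with respect to the germ $\sigma$-field at $t$, and infinitude of the limsup/liminf is preserved under an equivalent change of measure; so it suffices to establish it for one convenient representative in the equivalence class.

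Third — and this is where the main work lies — I would need the analytic heart: showing that for an infinite-variation EI process with $\sigma=0$, the increments $X_{t+h}-X_t$ fail to be $O(h)$. The natural route is a second-moment / Paley–Zygmund argument on the jump part. Decompose $X_{t+h}-X_t = \sum_i \beta_i(\indi{t< U_i\le t+h} - h)$; because $\sum\abs{\beta_i}=\infty$ while the compensator contributes only $-h\sum\beta_i$ (which must be interpreted carefully, as the sum need not converge absolutely — this is exactly the infinite-variation subtlety), one shows that $\operatorname{Var}(X_{t+h}-X_t)$, or an appropriate truncated version, is of order larger than $h^2$, in fact that $\mathbb{E}[(X_{t+h}-X_t)^2]/h^2\to\infty$ along a suitable sequence $h\to 0$. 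Combined with a Borel–Cantelli argument along a geometric sequence $h_n=2^{-n}$ and the exchangeability (which gives enough independence across disjoint time windows to run a $0$–$1$ law, or to directly apply Hewitt–Savage), one upgrades ``unbounded in $L^2$'' to ``$\limsup=+\infty$ a.s.'' and symmetrically ``$\liminf=-\infty$ a.s.'', both one-sided. The left-hand limits follow from the right-hand ones applied to the time-reversed process $(X_1 - X_{(1-t)-})$, which is again an EI process of infinite variation.

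The main obstacle I anticipate is making the change-of-measure reduction rigorous when $\sum\beta_i$ does not converge absolutely: the Esscher transform must be set up so that the tilted compensator is well-defined, and one must check that the tilting is non-degenerate (equivalent measures, not merely absolutely continuous) on a small time-window after $t$. A secondary difficulty is that Rogozin's original argument, or its substitute, must be available in a form that only uses the local behavior near $t$, so that it genuinely transfers through the equivalence; if a direct transfer is awkward, the fallback is the self-contained second-moment argument of the third paragraph, which avoids Lévy processes altogether but requires care with the conditional (given the atom locations) structure of the increments.
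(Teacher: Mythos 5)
There is a genuine gap, and it sits exactly where the theorem is hard. Your central reduction --- tilt the law so that near $t$ the process ``looks like'' an infinite-variation L\'evy process and then invoke Rogozin \cite{MR0242261}, transferring back by equivalence of measures --- is not available: for an extremal EI process the jump sizes are the \emph{fixed} numbers $\beta_i$, each occurring exactly once in $[0,1]$, so its law is singular with respect to every L\'evy process; no Esscher-type density can change this. The change of measure that does exist for EI processes (Proposition \ref{changeOfMeasureProposition}) keeps the process in the EI class and only randomizes the drift and thins the jumps ($\alpha^\theta$, $\beta_j B_j$); its role in the actual proof is entirely different, namely to push the effective drift to $-\infty$ (Proposition \ref{propositionPropertiesOfAlphaTheta}), not to compare with a L\'evy process. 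Your fallback, a second-moment/Paley--Zygmund bound showing $\se[(X_{t+h}-X_t)^2]/h^2\to\infty$ plus Borel--Cantelli, cannot close the gap either: it is sign-blind, so at best it yields $\limsup_h\abs{X_{t+h}-X_t}/h=\infty$, which is the easy (Fristedt \cite{MR0288841}) half. The genuinely difficult half is $\liminf_h X_{t+h}/h=-\infty$ for a spectrally \emph{positive} infinite-variation process, where only the compensator can drag the path down; no symmetric moment estimate sees this. The paper's mechanism for the sign is: a zero-one law for the Dini derivatives (Proposition \ref{prop:ZeroOneLawForDiniDerivative}), the tilt making $\se(\alpha^\theta)\to-\infty$, a Sato-type bound $\proba{X_t\le 0}\ge 1/16$ for zero-drift spectrally positive EI processes (Lemmas \ref{lemma:lowerBoundProbabilityEI} and \ref{lemma:simpleAndMoreGeneralSato}), and, for two-sided jumps, the decomposition $X=X^p+X^n$ into independent one-signed parts evaluated along a random sequence coming from $X^n$. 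None of these ingredients appears in your proposal.

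Two further points would fail as written. First, in the $\sigma>0$ step you claim that only finitely many $U_i$ fall in a small interval around $t$, so the jump part is locally of bounded variation; this is false whenever infinitely many $\beta_i$ are nonzero (infinitely many $U_i$ land in any interval of positive length, and if $\sum\abs{\beta_i}=\infty$ the local variation of the jump part is a.s.\ infinite). The $\sigma\neq 0$ case is indeed true and can be handled via the law of the iterated logarithm, but by conditioning on the jump part and arguing as in \cite[Prop.~3.5]{MR3558138}, not by your bounded-variation claim. Second, increments of an extremal EI process over disjoint windows are exchangeable but \emph{not} independent, so the second Borel--Cantelli/``independence across windows'' step has no basis; the paper instead gets the a.s.\ statement from the constancy of $\underline{D}(X)$, $\overline{D}(X)$ (a Kolmogorov/Hewitt--Savage argument on the $U_i$ together with triviality of the Brownian germ field) combined with the expectation bound $\underline{D}(X)\le\se(\alpha^\theta/T)$. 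Your use of time reversal to pass from right to left derivatives is correct and matches the paper.
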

Reversibility for EI processes (the fact that $(X_1-X_{(1-t)-},t\leq 1)$ has the same law as $X$) implies that it is enough to handle the above theorem for the right-hand derivatives. 
By exchangeability, it is enough  consider $t=0$. 
We define\begin{esn}
\imf{\overline D}{X}=\limsup_{h\to 0 +} \frac{X_{h}}{h}
\quad\text{and}\quad
\imf{\underline D}{X}=\liminf_{h\to 0+} \frac{X_{h}}{h}. 
\end{esn}

In contrast, for finite variation EI processes $X$, 
which satisfy $\sigma=0$ and $\sum\abs{\beta_i}<\infty$, 
we can write them as $X_t=\tilde \alpha t+\sum_i \beta_i\indi{U_i\leq t}$ 
where $\tilde \alpha=\alpha-\sum_i\beta_i$. 
Finite variation EI processes therefore are characterized by the parameters $(\tilde\alpha,\beta)$. 
It is well known that $\imf{\overline D}{X}=\imf{\underline D}{X}= \tilde \alpha$ almost surely (cf. \cite[Cor. 3.30]{MR2161313}). 

Theorem \ref{RogozinForEITheorem} was proved for L\'evy processes 
in \cite{MR0242261} 
by using an integro-differential equation initially found by Cram\'er 
and later recognized and analyzed as a resolvent equation by Watanabe in \cite{MR0303602}. 
Additional proofs, based on the fluctuation theory for L\'evy processes, 
may be found in \cite[Ch. 9\S47]{MR1739520} and \cite{vigon:tel-00567466}. 
Bertoin proved the limsup statement in the spectrally positive EI case when ($\beta_i\geq 0$ for all $i$) 
in \cite{MR1957515} based on the results of Fristedt from \cite{MR0288841}. 
Kallenberg takes results further by considering upper envelopes of EI processes in \cite{MR2161313} 
by clever couplings with L\'evy processes. 
These results are nevertheless insufficient to obtain Thereom \ref{RogozinForEITheorem}. 
A particular case of the above result is found in \cite[Prop. 3.5]{MR3558138} 
under an additional hypothesis on $\beta$. 
Additionally, the same proposition  proves Theorem \ref{RogozinForEITheorem} 
whenever $\sigma\neq 0$ using the law of the iterated logarithm for Brownian motion. 
Hence, we could assume that $\sigma=0$ in our proofs, but the method is robust enough to handle it. 
Actually, in the L\'evy process setting, our method can also handle general L\'evy processes 
and gives and independent proof of Rogozin's result. 
This is done in Section \ref{LevyProcessSection}, while Theorem \ref{RogozinForEITheorem} is proved 
in Section \ref{totallyAsymmetricDiniSection}.


Our next application is to show that the zero-one laws for L\'evy processes of Millar are actually valid 
(and therefore a consequence) of the following result (cf. items a and b of \cite[Thm. 3.1]{MR0433606}) 
which links behaviour when reaching the minimum with behaviour at time zero. 
\begin{definition}
An EI process $X$ is said to be \emph{upward regular} if $\inf\set{t\in[0,1]: X_t>0}=0$ almost surely. 
$X$ is \emph{downward regular} if $-X$ upward regular. 
\end{definition}
Knight has given in \cite{MR1417982} the following necessary and sufficient conditions for $X$ to admit 
a unique minimum in the extreme setting: 
\begin{description}
\item[UM] either $\sigma\neq 0$ or $\sum_{i}\indi{\beta_i\neq 0}=\infty$ or $\sum_{i}\indi{\beta_i\neq 0}<\infty$ and $\sum_{i} \beta_i\neq \alpha$. 
\end{description}
\begin{theorem}
\label{ZeroOneLawTheorem}
Let $X$ be an extremal EI process satisfying \defin{UM}. 
Let $\underline X_1=\inf_{s\in [0,1]}X_s $ and 
let $\rho$ be the unique element of $\set{t\in [0,1]: X_t\wedge X_{t-}=\underline X_1 }$. 
Then $X_{\rho}>\underline X_1$ if and only if $X$ is irregular upward 
and $X_{\rho-}>\underline X_1$ if and only if $X$ is irregular downward. 
In particular, $X$ is continuous at $\rho$ if and only if $X$ is both upward and downward regular 
and this holds on the set where $X$ has paths of infinite variation. 
\end{theorem}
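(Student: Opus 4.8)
The plan is to decompose Theorem~\ref{ZeroOneLawTheorem} into the main biconditional---$X_\rho>\underline X_1$ if and only if $X$ is irregular upward---together with its mirror statement obtained by applying the reversibility of EI processes (so that $X_{\rho-}>\underline X_1$ becomes a statement about upward regularity of $(X_1-X_{(1-t)-},t\le1)$, i.e. downward irregularity of $X$). The final sentence of the theorem then follows purely formally: $X$ is continuous at $\rho$ exactly when $X_\rho=\underline X_1=X_{\rho-}$, which by the two biconditionals is equivalent to $X$ being both upward and downward regular; and on the infinite-variation set, Theorem~\ref{RogozinForEITheorem} forces $\imf{\overline D}{X}=\infty$ and $\imf{\underline D}{X}=-\infty$ (at $t=0$, hence by exchangeability at every fixed time, hence---via a standard argument---at the a.s.\ unique random time $\rho$, using that $\rho$ is measurable with respect to the path and some care with the randomness of the location), which immediately yields both upward and downward regularity.

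For the core biconditional I would use a path-decomposition at the minimum. Condition \textbf{UM} from \cite{MR1417982} guarantees that $\rho$ is well defined. The standard technique for L\'evy processes (Millar) is to split the path at $\rho$ into a pre-minimum part (time-reversed, it is the path run backward from the minimum) and a post-minimum part (the path run forward from the minimum, which stays above its starting value); for EI processes the analogous decomposition should be available, and indeed the pre/post-minimum fragments of an EI path should again be, conditionally, EI-type processes (or at least have enough structure to analyze their behaviour at the endpoint). The post-minimum fragment started at $\rho$ stays $\ge\underline X_1$; the question ``$X_\rho>\underline X_1$'' asks whether it \emph{strictly} exceeds $\underline X_1$ immediately, i.e. whether it leaves $0$ going strictly up at time $0$. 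The key is to identify this event with the event that the ``overall'' process $X$ (suitably re-rooted or, by exchangeability, at time $0$) does \emph{not} immediately enter $(0,\infty)$---i.e. upward irregularity. Concretely: $X_\rho=\underline X_1$ means the minimum is attained with no jump and with the value hit by a sequence of points arbitrarily close in time from the right, which is precisely $X$ being upward regular when viewed from its minimum; and by the re-rooting/invariance of the EI structure this transfers to upward regularity at time $0$.

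The main obstacle I expect is making the path decomposition at the minimum rigorous for general extremal EI processes---L\'evy process proofs lean heavily on the Markov property and on Vervaat-type or Pitman-type transformations, and EI processes have neither the Markov property nor stationary increments. The substitute should be: (i) the discrete-time bijective machinery already invoked in the introduction (the Abramson--Pitman construction of walks conditioned to stay positive from \cite{MR1232850}, and the convex-minorant/permutation bijection of \cite{MR2825583}), applied to the discretized process $X_{k/n}$, which gives an exact combinatorial description of the pre- and post-minimum fragments; and (ii) a passage to the limit $n\to\infty$, controlling the location of the discrete minimum (which converges to $\rho$ under \textbf{UM}) and the values $X$ takes just after it. So the plan is to prove the biconditional first for the discrete skeleton, where ``$X$ does not immediately go strictly positive after its minimum'' is literally ``the post-minimum walk has a zero increment / ties its minimum'', then take limits; the delicate point is the interchange of limits, ensuring that strict positivity just after $\rho$ is not created or destroyed in the limit, which is where the infinite-variation dichotomy and Theorem~\ref{RogozinForEITheorem} do the work in the last sentence, and where for the general (possibly finite-variation) case one must argue more carefully using \textbf{UM} to rule out the degenerate ties.
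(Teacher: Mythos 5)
Your high-level plan coincides with the paper's: split the theorem into the single biconditional ``$X_\rho>\underline X_1$ iff irregular upward'', get the mirror statement by reversibility, and deduce the last sentence from Theorem \ref{RogozinForEITheorem}. (One side remark: your parenthetical about transferring the Dini-derivative statement to the random time $\rho$ is both unnecessary and not valid as stated---upward/downward regularity is a statement about the fixed time $0$, and the passage from regularity at $0$ to continuity at $\rho$ is exactly the biconditional you are trying to prove, not a transfer of \eqref{Eq:InfinitudeOfDiniDerivatives} to $\rho$.) However, the core of the argument is left as an assertion rather than a proof, and this is a genuine gap. You write that ``$X_\rho=\underline X_1$ \ldots is precisely $X$ being upward regular when viewed from its minimum; and by the re-rooting/invariance of the EI structure this transfers to upward regularity at time $0$,'' but EI processes are not Markov and there is no re-rooting invariance at the random time $\rho$; the precise mechanism is the identity in law of Bertoin (Theorem 3.1 of \cite{MR1232850}, recalled here as Theorem \ref{teoBertoinsTheoremPostandPreInfimumProcesses}): the post-minimum process and the reversed pre-minimum process jointly have the law of the juxtaposition of the excursions of $X$ above and below $0$. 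This is a ready-made continuous-time statement, so your plan to redo the discrete Abramson--Pitman bijection on the skeleton $X_{k/n}$ and control the limit $n\to\infty$ of the discrete minimum is re-proving Bertoin's theorem; it is not wrong in principle, but it is exactly the ``delicate interchange of limits'' you flag and you give no argument for it, while the citation makes it unnecessary.

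Even granting the decomposition, two concrete steps are missing and they are where \defin{UM} actually enters. First, one needs that $X$ a.s.\ does not jump into or from level $0$, i.e. $\proba{X_{U_j}=0 \text{ or } X_{U_j-}=0}=0$; this follows from the absence of atoms of $X_t$ under \defin{UM} (Lemma 1.2 of \cite{MR1417982}) by conditioning on $U_j$, and without it the identification ``initial jump of the post-minimum process $=$ jump of $X$ at $\tau_0^+=\inf\{t>0:X_t>0\}$'' does not yield the biconditional. Second, in the direction ``irregular upward $\Rightarrow X_\rho>\underline X_1$'' you must exclude the event that the first excursion below $0$ ends by hitting $0$ continuously (so that $\Delta X_{\tau_0^+}=0$ even though $\tau_0^+>0$); the paper rules this out because, through the same decomposition, it would force the reversed pre-minimum process to hit $0$ twice, i.e. the minimum of $X$ would be attained at two times, contradicting \defin{UM}. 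Your phrase about ``degenerate ties'' points at this issue but supplies no argument. Finally, for the converse direction one also needs the observation that upward irregularity is a tail event for the uniforms driving $X$ (a zero--one law), so that positive probability upgrades to almost sure; this too is absent from the proposal.
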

Millar actually proves the above result in the L\'evy process setting at more general random times 
and refers to this as the pure behavior of L\'evy processes, 
while noting that it is rather exceptional in the class of Markov processes. 
Millar also remarks that it is this zero-one law which implies that 
the conditional law of $X_{\rho+\cdot}$ given $X_{\cdot\wedge\rho}$ 
depends only on $\underline X_1$ and $X_\rho$. 
The extension to general random times follows quite easily with Millar's arguments from the stated result above. 
When $X$ is a L\'evy process of finite-variation, necessary and sufficient conditions 
for regularity have been found by Bertoin in \cite{MR1465812} in terms of the L\'evy measure. 
We believe that a similar characterization should be available for EI processes in terms of $\beta$. 
This is left as an \defin{open problem}.

Regularity of half-lines for a L\'evy process has many other applications: 
it helps in obtaining perfectness of the zero set  
and in constructing a continuous (Markovian) local time (Theorem 6.6 of \cite{MR3155252}); 
it implies uniqueness for solutions of time-change equations used to construct multitype branching processes
(Lemma 6 in \cite{MR3689968}); 
it proves continuity of the Vervaat transform for cyclically exchangeable processes (\cite{MR3558138}); 
regularity of $(-\infty,0)$ has been used when pricing perpetual American put options, 
as a condition for smooth pasting (see the discussion on Section 1.4.4 in \cite{MR2343206}).

Our second application concerns the weak limit of and EI process $X$ ending at zero, 
conditioned on remaining above $-\eps$, as $\eps\to 0$. 
The limiting process is called the Vervaat transform of $X$, and is defined as:
\[
V_t=X_{\cdot+\rho \mod 1}
\]
\begin{theorem}
\label{DIMTheorem}
Let $X$ be an EI process with $\alpha=0$ 
which is both upward and downward regular. 
Consider $\eps>0$ and let  $X^\eps$ have the law of $X$ conditionally on $\underline X_1>-\eps$. 
Then $X^\eps\stackrel{d}{\to} V$ as $\eps\to 0$. 
\end{theorem}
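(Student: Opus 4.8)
The plan is to combine the Vervaat identity for exchangeable bridges with a size‑biasing argument. Write $V=\imf{\mathrm{Verv}}{X}$ for the Vervaat transform, $\rho$ for the location of the minimum of $X$, and let $\theta_g$ denote the cyclic shift $\paren{\theta_g w}_t=w_{(t+g)\bmod 1}-w_g$. Since $\alpha=0$, Kallenberg's representation forces $X_1=0$, so $X$ is an EI bridge; being both upward and downward regular it satisfies \defin{UM}, hence has an a.s.\ unique minimum, and by Theorem \ref{ZeroOneLawTheorem} it is a.s.\ continuous at $\rho$. Thus $V$ is well defined, $V\geq 0$, $V_0=V_{1-}=0$, and $V_t\wedge V_{t-}>0$ for all $t\in(0,1)$ by uniqueness of the minimum; consequently $\mathrm{Verv}$ is a.s.\ continuous at $X$, $V$ is a.s.\ reversible, and for each $\delta>0$ the number $\imf{c_\delta}V:=\inf_{t\in[\delta,1-\delta]}\paren{V_t\wedge V_{t-}}$ is a.s.\ strictly positive.

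Cyclic exchangeability of the increments of the bridge $X$ gives $\theta_G V\deq X$, where $G$ is uniform on $[0,1]$ and independent of $V$; realising $X$ as $\theta_GV$ one has $\rho=1-G$ and, since $V\geq 0$ with $\inf V=0$ attained, $\underline X_1=-V_G$. Therefore $X^\eps$ has the law of $\theta_GV$ conditioned on $\set{V_G<\eps}$: conditionally on $V$ this renders $G$ uniform on $\imf{E_\eps}V:=\set{g\in[0,1]:V_g<\eps}$, while the marginal law of $V$ is weighted by $\imf{\ell_\eps}V:=\abs{\imf{E_\eps}V}$, the Lebesgue measure; note $\mathbb{E}\,\imf{\ell_\eps}V=\mathbb{P}\paren{\underline X_1>-\eps}\to 0$ as $\eps\to 0$.

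Once $\eps<\imf{c_\delta}V$ the set $\imf{E_\eps}V$ is contained in $[0,\delta)\cup(1-\delta,1]$, and $\theta_gV$ converges to $V$ in the Skorokhod topology, uniformly over $g\in[0,\delta)\cup(1-\delta,1]$, as $\delta\to 0$. Splitting $\mathbb{E}\bra{\imf F{X^\eps}}=\mathbb{E}\bra{\imf F{\theta_GV}\mid V_G<\eps}$ according to whether $G\in[\delta,1-\delta]$, one gets, for every bounded continuous $F$,
\begin{esn}
\mathbb{E}\bra{\imf F{X^\eps}}=\mathbb{E}\bra{\imf FV\mid V_G<\eps}+\imf{o_\eps}1+\imf{o_\delta}1,
\end{esn}
the first error being $\oi{\mathbb{P}\paren{G\in[\delta,1-\delta]\mid V_G<\eps}}$ and the second being controlled, via the weighted law of $V$ again, by the modulus of continuity of $F$ along $g\mapsto\theta_gV$ near $0$ and $1$. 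Hence Theorem \ref{DIMTheorem} reduces to two claims: \defin{(A)} the $\ell_\eps$-weighted law of $V$ converges, as $\eps\to 0$, to the law of $V$; and \defin{(B)} $\mathbb{P}\paren{G\in[\delta,1-\delta]\mid V_G<\eps}\to 0$ for each fixed $\delta$. Since $\mathbb{P}\paren{G\in[\delta,1-\delta]\mid V_G<\eps}=\mathbb{E}^{\ell_\eps\text{-weighted}}\bra{\abs{\imf{E_\eps}V\cap[\delta,1-\delta]}\big/\imf{\ell_\eps}V}$ and the integrand vanishes as soon as $\eps<\imf{c_\delta}V$, claim (B) follows from (A).

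The heart of the matter is (A). For small $\eps$, $\imf{\ell_\eps}V$ is a functional only of $V$ near its two zeros---equivalently, of the post‑minimum and pre‑minimum processes of $X$ in a neighbourhood of their common starting value. Claim (A) amounts to the statement that, after a suitable normalisation, $\imf{\ell_\eps}V$ becomes asymptotically independent of every bounded continuous functional of $V$, so that $\mathbb{E}\bra{\imf FV\,\imf{\ell_\eps}V}\big/\mathbb{E}\,\imf{\ell_\eps}V\to\mathbb{E}\,\imf FV$; this is a decoupling between the microscopic manner in which $X$ leaves its minimum (governed by the continuity at $\rho$ from Theorem \ref{ZeroOneLawTheorem} and by the oscillatory local behaviour there) and the macroscopic shape of the excursion. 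I expect this decoupling to be the main obstacle: since EI processes are not Markov, one cannot invoke an entrance law at the minimum, so it must be extracted from the exchangeable (de Finetti) structure of the increments, or, more robustly, by discretising---for a discrete EI bridge with unique minimum the conditioning on staying nonnegative produces exactly the Vervaat transform, by the cycle lemma of \cite{MR1232850,MR2825583}---and then transferring to continuous time through the a.s.\ continuity of $\mathrm{Verv}$ at $X$, an invariance principle, and a careful interchange of the mesh limit with the limit $\eps\to 0$.
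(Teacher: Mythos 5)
Your reduction (write $X\deq\theta_G V$ with $G$ uniform and independent of $V$, condition on $\{V_G<\eps\}$, and split according to whether $G$ lies near the wrap point) is a sensible skeleton, and it is indeed the strategy underlying the known proofs. But the proof stops exactly where the content of the theorem lies: claim (A), that the law of $V$ weighted by $\ell_\eps(V)=\mathrm{Leb}\{t: V_t<\eps\}$ and renormalized converges to the unweighted law of $V$, is asserted rather than proved --- you yourself call it ``the main obstacle'' and only list candidate strategies (discretization, the cycle lemma, an invariance principle, an interchange of the mesh limit with $\eps\to 0$). This is not a routine step: a priori the normalized weights $\ell_\eps(V)/\se[\ell_\eps(V)]$ could converge to a nondegenerate functional of the germ of $V$ at its two zeros, in which case the conditioned limit would be a size-biased version of $V$, not $V$; ruling this out is precisely where upward and downward regularity and the continuity of $X$ at its minimum must enter quantitatively, and no such argument is given. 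As it stands the proposal reduces Theorem \ref{DIMTheorem} to an unproved statement of essentially the same difficulty. There are also secondary gaps that would need attention in a complete write-up: the identity $X\deq\theta_GV$ with $G$ uniform and independent of $V$, the uniform Skorokhod continuity of $g\mapsto\theta_gV$ for $g$ near $0$ and $1$, and the deduction of (B) from (A) (weak convergence of the weighted laws does not by itself control the probability of the closed, $\eps$-dependent event $\{c_\delta(V)\le\eps\}$ without an extra Portmanteau/monotonicity step); these are fixable, but they are exactly the kind of statements proved in the literature you would be re-deriving.

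For comparison, the paper does not reprove the conditioning limit at all: it observes that $\alpha=0$ together with regularity of both half-lines excludes piecewise linear paths, hence \defin{UM} holds and the minimum is unique; Theorem \ref{ZeroOneLawTheorem} then gives a.s.\ continuity of $X$ at its minimum; and with these two hypotheses verified, Corollary 3.1 of \cite{MR3558138} (a result for cyclically exchangeable processes that encapsulates precisely the decoupling you are missing in (A)) yields $X^\eps\stackrel{d}{\to}V$, with Theorem \ref{RogozinForEITheorem} showing the hypotheses always hold in the infinite variation case. So the genuinely new input in the paper is the continuity at the minimum, not the conditioned convergence itself. If you want a self-contained argument along your lines, you must actually prove (A) --- for instance by carrying out the discrete cycle-lemma approximation you allude to and justifying the double limit --- or else simply verify the hypotheses of the cited corollary as the paper does.
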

Note that the above theorem always applies to infinite variation EI processes thanks to Theorem \ref{RogozinForEITheorem}. 
The above theorem was proved when $X$ is a Brownian bridge from $0$ to $0$ of length $1$ 
by Durrett, Iglehart and Miller in \cite{MR0436353}. 
The form given above is taken from  \cite{MR3558138} and is more general, 
but is actually a simple consequence of the results in that paper. 
What was lacking in the latter reference is the zero-one law at the minimum 
of our Theorem \ref{ZeroOneLawTheorem} and, 
in particular, the fact that all infinite variation EI processes reach their minimum continuously. 

Our next application is to extend the description of the convex minorant of a L\'evy process of \cite{MR2978134} 
to the EI setting. 
In the latter reference, it is noted that this description gives another interpretation 
of a fundamental fact of the fluctuation theory of L\'evy processes, namely, 
the Pecherskii-Rogozin identity of  \cite{MR0260005}. 
We will consider EI processes which do not have piecewise linear trajectories. 
By considering the extremal case, 
this happens if and only if 
\begin{description}
\item[NPL] $\sigma>0$ or $\sum_{i}\indi{\beta_i\neq 0}=\infty$. 
\end{description}We will call theses processes of the NPL type. 
The setting of infinite variation EI processes of Theorem \ref{RogozinForEITheorem} 
is an important step in the proof. 
\begin{definition}
The \emph{convex minorant} of a \cadlag\ function $\fun{f}{[0,1]}{\re}$ 
is the greatest convex function $c$ that is bounded above by $f$. 
The \emph{excursion set} is the open set
\[
\mc{O}=\set{t\in [0,1]: \imf{f}{t}>\imf{c}{t}}. 
\]Its maximal components, intervals of the form $(g,d)$, 
are termed \emph{excursion intervals} 
and they have an associated \emph{length} $d-g$,  
\emph{increment} $\imf{f}{d}-\imf{f}{g}$ ,
\emph{slope} $(\imf{f}{d}-\imf{f}{g})/(d-g)$ and 
\emph{excursion} $e(t)=\imf{f}{t+g}-\imf{c}{g+s}$ defined for $t\in [0,d-g]$. 
\end{definition}
Recall that an upper bounded family of convex functions has a convex supremum, 
which explains why the convex minorant exists. 
Let $C$ be the convex minorant of an EI process $X$ of the NPL type. 
As stated in the next result, its excursion set
\[
\mc{O}=\set{t\in [0,1]: X_t>C_t}
\]is open and of Lebesgue measure $1$. 
We will consider the following precise ordering of the excursion intervals. 
Let $\paren{V_i,i\geq 1}$ be an iid sequence of uniform random variables on $[0,1]$ 
and let $(g_1, d_1)$, $(g_2, d_2)$, $\ldots$  be the sequence of distinct excursion intervals 
which are successively discovered by the sequence $\paren{V_i}$. 
With them, we can define the sequence of lengths, slopes  and excursions $\paren{e^i}$. 
We will also consider the partition induced by a stick-breaking scheme based on $\paren{V_i}$: 
define
\[
S_0=0,\quad S_{n+1}=S_n+L_n\quad\text{and}\quad L_{n+1}=(1-S_n)V_{n+1}.
\] 
Then $\paren{L_i}$ is the uniform stick-breaking process and $S$ is the partition of $[0,1]$ 
induced by its cumulative sums. 
Note that this is a very sparse partition of $[0,1]$ which we can use to analyze $X$ by considering: 
$X_{S_i}-X_{S_{i-1}}$ and the sequence of Knight bridges 
where $K^i_t$ is the Knight transform of $X-X_{S_{i-1}}$ on $[0,L_i]$. 
The \emph{Knight transform} of an EI process $Y$ starting at zero on an interval $[0,t]$ 
and satisfying \defin{UM} 
is obtained by first defining the Knight bridge 
$K_s=Y_s-sY_t/t$, 
letting $\rho$ be the location of its (unique) minimum to finally define 
\[
s\mapsto K_{(\rho+s) \,\mathrm{mod}\, t}-K_{\rho}\wedge K_{\rho-} \text{ for }s\in [0,t]. 
\] 
%
\begin{theorem}
\label{ConMinTheorem}
Assume that the EI process $X$ satisfies NPL. 
Then, its excursion set $\mc{O}$ is open and of Lebesgue measure $1$ and 
the following equality in law holds: 
\[
\paren{ d_i-g_i, X_{d_i}-X_{g_i}, e^i}_{i\geq 1}\stackrel{d}{=}\paren{ L_i, X_{S_i}-X_{S_{i-1}}, K^i}_{i\geq 1}. 
\]
\end{theorem}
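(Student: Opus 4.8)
\smallskip

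The plan is to reduce the continuous-time statement to its discrete-time counterpart from \cite{MR2825583} via a limiting/sampling argument, using the very sparse stick-breaking partition $S$ already introduced. First I would record the structural facts that make the right-hand side meaningful: since $X$ satisfies NPL, an extremal EI process restricted to any subinterval $[S_{i-1}, S_i]$, recentered to start at zero, is again an EI process satisfying the \textbf{UM} condition (the NPL hypothesis is inherited by subintervals because either $\sigma>0$ survives restriction, or $\sum_i \si_{\beta_i\neq 0}=\infty$ forces infinitely many jumps in every nondegenerate subinterval, which also gives UM there). Hence each Knight transform $K^i$ is well defined. Moreover, conditionally on the partition $S$ and on the increments $X_{S_i}-X_{S_{i-1}}$, the bridges on the successive blocks are independent, and each is a function of an EI bridge; this is the continuous-time analogue of the permutation-bijection decomposition and follows from exchangeability of the increments together with Kallenberg's representation.

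\smallskip

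Next I would establish the two deterministic claims about $\mc{O}$. That $\mc{O}$ is open is immediate since $X$ is \cadlag\ and $C$ is continuous (convex on a compact interval), so $\set{X_t>C_t}$ is a union of the open set where $X$ and $X_{-}$ both exceed $C$ together with isolated jump times; a short argument shows the jump times of $X$ lie in the interior of excursion intervals when NPL holds, because a convex minorant can only touch $f$ at points of the ``lower'' boundary and an EI process of NPL type has no one-sided linear pieces. That $\leb(\mc{O})=1$ is equivalent to saying the contact set $\set{t: X_t\wedge X_{t-}=C_t}$ is Lebesgue-null; I would deduce this from Theorem \ref{RogozinForEITheorem} in the infinite-variation case (at a contact point the convex minorant forces a finite one-sided slope, contradicting $\overline D = \infty$, $\underline D = -\infty$ almost surely, and then a Fubini argument over $t$ upgrades ``for fixed $t$ a.s.'' to ``a.s.\ for Lebesgue-a.e.\ $t$''), and in the remaining NPL case $\sigma>0$ is not required but $\sum_i\si_{\beta_i\neq0}=\infty$ with finite variation is handled by the known finite-variation analysis plus density of the jump times.

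\smallskip

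The core identity in law I would obtain by a discrete approximation. Sample $X$ at the times $k/n$, $k=0,\dots,n$; the resulting discrete EI process $X^{(n)}$ has a convex minorant whose excursion intervals, discovered in the order dictated by the same uniform sequence $\paren{V_i}$, satisfy \emph{exactly} the Theorem 4-type identity of \cite{MR2825583}: jointly over $i$, the $i$-th discovered excursion block has (length, increment, excursion path) equal in law to the $i$-th stick-breaking block's (length, increment, Knight transform of the discrete bridge). I would then let $n\to\infty$. The left-hand side converges because the convex minorant of $X^{(n)}$ converges to that of $X$ uniformly (convexity plus uniform convergence of $X^{(n)}$ to $X$ off the jump times, using that $C$ is continuous), hence the excursion intervals discovered by $\paren{V_i}$ converge — here the NPL hypothesis and $\leb(\mc{O})=1$ are what guarantee that $V_i$ a.s.\ lands strictly inside a genuine excursion interval and that the discrete excursion containing it converges to the right continuous-time one. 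The right-hand side converges because the stick-breaking partition $S$ is fixed (depends only on $\paren{V_i}$), $X^{(n)}_{\lfloor nS_i\rfloor/n}\to X_{S_i}$, and the Knight transform is continuous at EI-bridge paths satisfying UM (the location $\rho$ of the minimum is a.s.\ continuous in the path by uniqueness, which holds under UM). Passing to the limit in the distributional equality then yields the claim.

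\smallskip

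The main obstacle I expect is the continuity of the excursion-interval map and of the Knight transform under the discrete-to-continuous limit: one must rule out that the uniform points $\paren{V_i}$ fall on the boundary of excursion intervals or at the argmin of a bridge, and must control the joint convergence of infinitely many blocks simultaneously. The first difficulty is dissolved by $\leb(\mc{O})=1$ together with uniqueness of the minimum under UM (so boundary points and argmins form a Lebesgue-null, hence $V_i$-avoided, set); the second by noting that the stick-breaking lengths $L_i$ decay geometrically in expectation, so a tail-truncation argument reduces the joint convergence to finitely many blocks plus a uniformly small remainder. Everything else — the discrete identity, openness of $\mc{O}$, measurability — is either cited or routine.
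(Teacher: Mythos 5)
Your overall strategy (invoke the discrete-time EI description of \cite{MR2825583} and pass to the limit) is in the right spirit, but it is organized differently from the paper and the limit passage hides the real difficulty. The paper does not take a limit of the full sequence identity: it first proves a single path transformation in continuous time, the 3214 transform of Theorem \ref{theorem:pathTransformationForConmin}, by approximating \emph{that one map} with its discrete counterpart (Theorem 8.1 of \cite{MR2825583}) and using its Skorohod continuity on the set of paths characterized by Proposition \ref{proposition:BasicPropertiesConMin}; Theorem \ref{ConMinTheorem} then follows by iterating the transformation on the unchanged parts of the trajectory, so that only one continuity statement ever has to be checked. Your route instead requires simultaneous convergence of all discovered excursion intervals, their excursions, and all Knight transforms, and this is where the gaps are.

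Concretely, two steps do not hold as you argue them. First, you claim the Knight transform is continuous at EI-bridge paths ``satisfying UM, by uniqueness of the minimizer''. Uniqueness of the argmin gives convergence of its location, but the Vervaat-type rearrangement is continuous in the Skorohod topology only if the path attains its minimum \emph{continuously}; by Theorem \ref{ZeroOneLawTheorem} this holds exactly when the bridge is regular in both directions, which Theorem \ref{RogozinForEITheorem} guarantees in the infinite-variation case but which can fail for finite-variation NPL processes ($\sigma=0$, $\sum_i\indi{\beta_i\neq 0}=\infty$, $\sum_i\abs{\beta_i}<\infty$), where the process may jump from or onto its minimum. Your limit argument therefore breaks precisely in the case that the careful analysis of Section 6.3 of \cite{MR2978134} (and the paper's Proposition \ref{proposition:BasicPropertiesConMin}) is designed to handle. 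Second, the convergence of the excursion interval of the sampled walk containing $V_i$ to the corresponding interval of $X$ is not a consequence of $\leb(\mc{O})=1$ alone: one needs the structural facts of Proposition \ref{proposition:BasicPropertiesConMin} --- distinct slopes for distinct faces, the sign condition $\Delta X_g\Delta X_d\geq 0$ at the endpoints, and $\liminf_{t\to 0+}(X_{U_i+t}-X_{U_i})/t=-\infty$ at jump times --- to rule out spurious or merging faces of the discrete minorants near the contact points; your proposal never establishes these (and your openness argument for $\mc{O}=\set{t:X_t>C_t}$, which must exclude jumps landing on the minorant, is likewise only gestured at). Finally, note that the discrete identity of \cite{MR2825583} is stated for a uniform stick-breaking of $\set{1,\dots,n}$, not for the continuous stick-breaking built from $(V_i)$, so an additional coupling/convergence step for the partitions is needed before the two sides can even be compared along $n\to\infty$.
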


Recent papers have used the above description of the convex minorant (in the L\'evy process case) 
to develop an exact simulation method for the maximum of a stable process 
(found in \cite{2018arXiv180601870G}) and an approximate simulation method 
(albeit very efficient, cf. \cite{2018arXiv181011039G})  
for the maximum of L\'evy processes whose one-dimensional distributions can be sampled exactly. 
This is particularly relevant to Monte Carlo methods for ruin probabilities with finite and deterministic horizon. 
In \cite{MR3390137}, 
the classical Cramer-Lundberg ruin process is generalized to an exchangeable increment process on $[0,\infty)$ 
to relax the independence between claim sizes; 
these are mixtures of L\'evy processes. 
In contrast to the classical setting, 
when working under the classical net profit condition, 
the ruin probability might not converge to zero as the initial capital goes to infinity and a new net profit condition is needed. 
In the finite-horizon case, 
we would be dealing with an EI process of the type considered here; 
Theorem \ref{ConMinTheorem} would give us access to the ruin probabilities. 

We end this section with a few comments on the organization of the paper. 
Our main result is Theorem \ref{RogozinForEITheorem}; 
all others have a simple proof from Theorem \ref{RogozinForEITheorem} and more specialized results from the literature. 
A brief outline of the proof of Theorem \ref{RogozinForEITheorem}, 
which explains the organization of the paper, is as follows.

\begin{steps}
\item Assume $\alpha, \sigma=0$ and $\beta_j\geq 0$ for every $j$ (if $\beta_j\leq 0$, apply this step to $-X$). 
\begin{description}
 \item[$\bullet$ $\overline D(X)=\infty$] This follows from results of \cite{MR0288841}.
        \item[$\bullet$  $\underline D(X)=-\infty$]
 We use an exponential change of measure (which reduces to the well known Esscher transform if $X$ is a L\'evy process), 
 with parameters $\theta\in\re$ and $T\in(0,1)$, 
 to deduce that $\underline D(X)=\alpha^\theta/T+\underline D(X^\theta)$, 
 where $X^\theta$ is another EI  process, 
 $\alpha^\theta$ is a random variable 
 (not independent of $X^\theta$, although for L\'evy processes, $\alpha^\theta$ is deterministic). 
 A lower bound on the probability that an EI process with positive jumps is non-positive 
 (found in \cite{MR1739520} for L\'evy processes) implies that $\underline D(X^\theta)\leq 0$. 
 It remains to notice that the infinite variation hypothesis gives us
 $\alpha^\theta\to -\infty$ when $\theta\to -\infty$, 
 thereby proving Theorem \ref{RogozinForEITheorem} in this case. 
\end{description}
\item Assume $\sigma\neq 0$ or 
$\sum_j\abs{\beta_j}=\infty$; 
also set $\alpha=0$. 
\begin{description}
 \item[$\bullet$ $\underline D(X)=-\infty$] We note that the aforementioned lower bound is valid when $\sigma\neq 0$  
 and that it works along deterministic subsequences, 
 so that  $\liminf_{n\to\infty} X_{t_n}/t_n\leq 0$ whenever $t_n\downarrow 0$ and $X$ has only positive jumps and a Brownian component. 
 We then write $X=X^p+X^n$, where $X^p$ and $X^n$ are independent, 
 $X^n$ only has negative jumps and $X^p$ only has positive jumps and contains the Brownian component (if any). 
 If $X^p$ or $X^n$ has finite variation, we use \cite[Cor. 3.30]{MR2161313} and Step 2. 
 Hence, assume both have infinite variation. 
 We then get a random subsequence $T_n\downarrow 0$ such that $X^p_{T_n}/T_n\to -\infty$ and, using independence, we get $\liminf X^n_{T_n}/T_n\leq 0$. 
 Hence, we obtain $\underline D(X)=-\infty$. 
 \item[$\bullet$ $\overline D(X)=\infty$] Apply the previous case to $-X$.
\end{description}
\end{steps}

The paper is organized as follows: 
In Section \ref{LevyProcessSection} we present a simplified proof following the outline above in the setting of L\'evy processes. 
This is because the exponential change of measure and lower bounds on probabilities discussed above are already known. 
In Section \ref{totallyAsymmetricDiniSection}, 
we consider Theorem \ref{RogozinForEITheorem} in the case of EI processes. 
Here, we state and prove the exponential change of measure and lower bounds for probabilities. 
Finally, Section \ref{furtherApplicationSection} is devoted to the applications of our results, 
and contains the proofs of Theorems \ref{ZeroOneLawTheorem}, \ref{DIMTheorem} and \ref{ConMinTheorem}. 

\section{The L\'evy process case}
\label{LevyProcessSection}

We now illustrate the proof of Theorem \ref{RogozinForEITheorem} in the case of L\'evy processes. 
This proof is the only one published that does not use fluctuation theory for L\'evy processes and can be considered to be simpler. 
It is based on basic facts on L\'evy processes and on the Esscher transform. 
The reader might consult \cite{MR1406564} and \cite{MR1739520} for these basic facts, some of which we now recall. 
In particular, L\'evy processes satisfy the Blumenthal $0$-$1$ law and therefore the random variables 
$\imf{\overline D}{X}$ and $\imf{\underline D}{X}$ are actually constant. 

Recall that $X$ can be written as the independent sum of two L\'evy processes $X^1$ and $X^2$,  
where $X^1$ has bounded jumps and $X^2$ is compound Poisson. 
Since $\lim_{t\to 0} X^2_t/t$ exists and is finite, we see that it suffices to prove Theorem \ref{RogozinForEITheorem} when $X$ has bounded jumps. 

Assume then, that the jumps of $X$ are bounded by $1$; 
we can then determine $X$ by its Laplace transform 
\[
\esp{e^{\lambda X_t}}=e^{t \imf{\Psi}{\lambda}}
\quad\text{where}\quad
\imf{\Psi}{\lambda}=\alpha \lambda+\lambda^2\sigma^2/2+\int_{[-1,1]} [ e^{\lambda x}-1-\lambda x ]\, \imf{\pi}{dx}
\]by the L\'evy-Kintchine formula. 
Let $X$ be a L\'evy process whose paths have infinite variation; equivalently, we assume that
\[
\sigma^2>0 \quad \text{or}\quad\int \abs{x}\, \imf{\pi}{dx}=\infty. 
\]

The L\'evy measure $\pi$, which is concentrated on $[-1,1]$, 
satisfies
\[ 
\int x^2\, \imf{\pi}{dx}<\infty. 
\]In other words, the characteristic triplet of $X$ is $(\alpha, \sigma, \pi)$. 


The following result is a trivial extension of the well-known Esscher change of measure for L\'evy processes, as found in \cite{MR3155252}. It will imply that the superior and inferior limits in Theorem \ref{RogozinForEITheorem} are not finite. Let $\F_t=\sag{X_s:s\leq t}$. 
\begin{proposition}[Esscher transform]
\label{EsscherCMProposition}
Fix $\theta\in\re$. 
Define the measure $\q$ by its restriction to $\F_t$ by
\[
\q|_{\F_t}=e^{\theta X_t-t\imf{\Psi}{\theta}}\cdot \p|_{\F_t}. 
\]Then, under $\q$, the stochastic process $X$ is a L\'evy process whose Laplace exponent is:
\[
\imf{\Psi^\theta}{\lambda}=\imf{\Psi}{\lambda+\theta}-\imf{\Psi}{\theta}. 
\]In particular, the characteristic triplet of $X$ under $\q$ is $(\alpha_\theta, \sigma, \pi_\theta)$ where:
\[
\alpha_\theta=\alpha+\theta\sigma^2+ \int_{[-1,1]} \bra{e^{\theta x}-1}x\, \imf{\pi}{dx}
\quad \text{and} \quad
\imf{\pi_\theta}{dx}=\imf{\indi{[-1,1]}}{x} e^{\theta x}\, \imf{\pi}{dx}. 
\]
\end{proposition}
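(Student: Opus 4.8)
The plan is the textbook argument for exponential tilting: first check that the candidate density defines a genuine probability measure via a martingale property, then compute the finite-dimensional Laplace transforms under $\q$ and read off the characteristic triplet from the L\'evy--Khintchine formula. Since the jumps of $X$ are bounded by $1$, $\theta\mapsto\imf{\Psi}{\theta}$ is finite on all of $\re$ and $\esp{e^{\theta X_t}}=e^{t\imf{\Psi}{\theta}}<\infty$, so $D_t:=e^{\theta X_t-t\imf{\Psi}{\theta}}$ is a strictly positive, integrable, $\F_t$-measurable random variable with $\esp{D_t}=1$. Using stationarity and independence of the increments of $X$ under $\p$,
\[
\espc{D_{t+s}}{\F_t}=D_t\,e^{-s\imf{\Psi}{\theta}}\,\esp{e^{\theta\paren{X_{t+s}-X_t}}}=D_t ,
\]
so $\paren{D_t}$ is a mean-one $\paren{\F_t}$-martingale. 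Hence the family $\set{\q|_{\F_t}}$ is consistent and, on the canonical path space of \cadlag\ functions, determines a probability measure $\q$ (equivalently, $\q$ is unambiguously defined on $\bigcup_{t}\F_t$, which is all the applications require).

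Next, to identify the law of $X$ under $\q$, fix $0=t_0<t_1<\cdots<t_n$, reals $\lambda_1,\ldots,\lambda_n$, and write $\Delta_k X:=X_{t_k}-X_{t_{k-1}}$ and $\delta_k:=t_k-t_{k-1}$. Factoring $D_{t_n}=\prod_{k=1}^{n} e^{\theta\Delta_k X-\delta_k\imf{\Psi}{\theta}}$ and using that $\Delta_1 X,\ldots,\Delta_n X$ are independent under $\p$,
\[
\se_{\q}\bra{\exp\paren{\textstyle\sum_{k=1}^{n}\lambda_k\Delta_k X}}=\prod_{k=1}^{n}\esp{e^{\paren{\lambda_k+\theta}\Delta_k X}}\,e^{-\delta_k\imf{\Psi}{\theta}}=\prod_{k=1}^{n} e^{\delta_k\paren{\imf{\Psi}{\lambda_k+\theta}-\imf{\Psi}{\theta}}} .
\]
Therefore, under $\q$, the increments of $X$ over disjoint intervals are independent and stationary and satisfy $\se_{\q}\bra{e^{\lambda\paren{X_{t+s}-X_t}}}=e^{s\imf{\Psi^\theta}{\lambda}}$ with $\imf{\Psi^\theta}{\lambda}=\imf{\Psi}{\lambda+\theta}-\imf{\Psi}{\theta}$; as the jumps of $X$ stay bounded by $1$ these Laplace transforms are finite for all real $\lambda$ and pin down the finite-dimensional distributions, so $X$ is a L\'evy process under $\q$ with Laplace exponent $\Psi^\theta$.

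Finally, to extract the triplet, substitute the L\'evy--Khintchine expression for $\Psi$ into $\Psi^\theta=\Psi(\cdot+\theta)-\Psi(\theta)$ and rearrange the jump integrand by
\[
e^{\paren{\lambda+\theta}x}-e^{\theta x}-\lambda x=e^{\theta x}\bra{e^{\lambda x}-1-\lambda x}+\lambda x\bra{e^{\theta x}-1}.
\]
The first summand integrates against $e^{\theta x}\,\imf{\pi}{dx}$ --- finite since $\int x^2\,\imf{\pi}{dx}<\infty$ and $e^{\theta x}$ is bounded on $[-1,1]$ --- and produces the compensated jump term with L\'evy measure $\imf{\pi_\theta}{dx}=\imf{\indi{[-1,1]}}{x}\,e^{\theta x}\,\imf{\pi}{dx}$, which is a bona fide L\'evy measure for the same reason; the $O\paren{x^2}$ quantity $x\paren{e^{\theta x}-1}$ is $\pi$-integrable, and together with the Gaussian cross-term $\lambda\theta\sigma^2$ coming from $\paren{\lambda+\theta}^2\sigma^2/2-\theta^2\sigma^2/2$ it contributes the drift $\alpha_\theta=\alpha+\theta\sigma^2+\int_{[-1,1]}\paren{e^{\theta x}-1}x\,\imf{\pi}{dx}$, while the Gaussian coefficient remains $\sigma$. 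This is exactly the asserted triplet $\paren{\alpha_\theta,\sigma,\pi_\theta}$. No step is deep; the only place a naive computation misfires is this last rearrangement, since in the infinite-variation regime of interest $\int\abs{x}\,\imf{\pi}{dx}$ may be infinite, so one must keep the compensator attached rather than splitting off a $\lambda\int x\,\imf{\pi}{dx}$ term.
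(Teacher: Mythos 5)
Your proof is correct: the exponential-martingale change of measure, the factorization of $D_{t_n}$ over increments to get the tilted Laplace exponent $\Psi^\theta=\Psi(\cdot+\theta)-\Psi(\theta)$, and the rearrangement $e^{(\lambda+\theta)x}-e^{\theta x}-\lambda x=e^{\theta x}[e^{\lambda x}-1-\lambda x]+\lambda x[e^{\theta x}-1]$ (keeping the compensator attached, as you rightly stress) is exactly the standard argument. The paper gives no proof of this proposition, calling it a trivial extension of the Esscher transform and citing Kyprianou's book, whose proof is essentially the one you wrote, so there is nothing to reconcile.
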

Note that $\alpha_\theta\to -\infty$ as $\theta\to -\infty$ when $\int \abs{x}\, \imf{\pi}{dx}=\infty$ or $\sigma^2>0$. 


We now specialize to the spectrally positive case  and then use a (simple) argument to deduce the general case. 
\subsection{The spectrally positive case}
\label{subsection:sprogozinLP}
We now focus on the spectrally positive case, which corresponds to when $\pi$ is concentrated on $[0,1]$. 
When $X$ is spectrally positive, a general result of Fristedt implies that $\limsup_{t\to 0}\abs{X_t}/t=\infty$; cf. the proof of part A of Theorem 1 in \cite{MR0288841}. 
Since $X_t/t$ is a reverse martingale with no negative jumps (when $t$ decreases) which does not converge (because of the preceeding phrase),  Proposition 7.19 in \cite{MR1876169} tells us that 
$\imf{\overline D}{X}=\infty$. 


To prove that $
\imf{\underline D}{X}
=-\infty$, we use the following result of Sato for spectrally positive L\'evy processes. 
\begin{lemma}
\label{SatosResultForSPLP}
If $X$ is a spectrally positive L\'evy process with parameters $(\alpha, \sigma, \pi)$ with jumps bounded by $1$ and $\alpha=\esp{X_1}\leq  0$ then $\proba{X_t\leq 0}\geq 1/16$ for all $t\geq 0$. 
Also, for any deterministic sequence $t_n\to 0$,
\[\liminf_{n\to\infty}\frac{X_{t_n}}{t_n}\leq 0.\] 
\end{lemma}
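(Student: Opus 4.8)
The plan is to obtain the second assertion cheaply from the first, so the real content is the uniform estimate $\proba{X_t\le 0}\ge 1/16$. For this I would first reduce to the centred case: set $\tilde X_t=X_t-\alpha t$, which is again a spectrally positive L\'evy process with jumps bounded by $1$ and with $\esp{\tilde X_1}=0$; since $\alpha\le 0$ we have $\tilde X_t\ge X_t$ for every $t\ge 0$, hence $\proba{X_t\le 0}\ge\proba{\tilde X_t\le 0}$. It therefore suffices to bound $\proba{X_t\le 0}$ from below for a spectrally positive L\'evy process $X$ with $\esp{X_1}=0$ and jumps bounded by $1$; such an $X$ is an $L^2$ martingale, because $\int x^2\,\pi(dx)<\infty$, and all its exponential moments are finite, because the jumps are bounded.

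The tool for the centred case is the elementary second moment estimate: for any centred $L^2$ random variable $Y$, Cauchy--Schwarz applied to $\esp{Y^-}=\esp{Y^-\indi{Y<0}}$ yields $\proba{Y\le 0}\ge\proba{Y<0}\ge(\esp{Y^-})^2/\esp{(Y^-)^2}$, while $\esp{Y^-}=\esp{Y^+}=\tfrac12\esp{|Y|}$. Taking $Y=X_t$ reduces $\proba{X_t\le 0}\ge 1/16$ to the reverse H\"older bound
\[
\esp{(X_t^-)^2}\le 4\,\big(\esp{|X_t|}\big)^2,
\]
uniformly in $t>0$. This is where spectral positivity is essential: $X_t^-$ is governed by the \emph{downward} motion of $X$, which has no jumps, so $X_t^-\le-\underline X_t=\sup_{s\le t}(-X_s)$ and, in the infinite--variation regime relevant to us, the left tail of $X_t$ is super-exponentially light (one has $\esp{e^{-\lambda X_t}}=e^{t\Psi(-\lambda)}<\infty$ for all $\lambda>0$ with $\Psi(-\lambda)$ growing superlinearly); thus $X_t^-$ is far from the heavy-tailed behaviour that would break such an inequality, and the boundedness of the jumps keeps the law of $X_t$ from degenerating so that $\esp{|X_t|}$ stays comparable to $\|X_t^-\|_2$.

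The step I expect to be the main obstacle is exactly this uniform reverse H\"older inequality: one must cover, with a single constant, both the ``diffusive'' regime (where a central limit heuristic makes $\|X_t^-\|_2$ and $\|X_t^-\|_1$ comparable) and the small-$t$ regime (where $X_t$ is close to its continuous/small-jump part and $\proba{X_t\le 0}$ is in fact close to $1$). What the phrase ``result of Sato'' refers to is precisely that this estimate is available in Sato's monograph among the bounds used to study the growth of the sample path as $t\downarrow 0$; so in practice the first part of the lemma can be quoted, and one need only match hypotheses and the constant $1/16$.

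Granting the first assertion, the second follows by a germ-field argument. Fix a deterministic sequence $t_n\to 0$ and apply the first assertion to $X$ itself (which has $\esp{X_1}=\alpha\le 0$) at each $t_n$, giving $\proba{X_{t_n}\le 0}\ge 1/16$ for all $n$. With $A_n=\set{X_{t_n}\le 0}$, the inequality $\proba{\limsup_n A_n}\ge\limsup_n\proba{A_n}$ (valid for any sequence of events, by continuity from above) gives $\proba{X_{t_n}\le 0\text{ for infinitely many }n}\ge 1/16>0$. Since $t_n\to 0$, for every $\eps>0$ all but finitely many $A_n$ lie in $\sag{X_s:s\le\eps}$, so this event belongs to the germ $\sigma$-field $\bigcap_{\eps>0}\sag{X_s:s\le\eps}$; by Blumenthal's $0$--$1$ law for L\'evy processes (recalled above) it therefore has probability $1$. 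On this almost sure event $X_{t_{n_k}}\le 0$ along a subsequence, so $X_{t_{n_k}}/t_{n_k}\le 0$, and hence $\liminf_{n\to\infty}X_{t_n}/t_n\le 0$ almost surely.
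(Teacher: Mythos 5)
Your handling of the second assertion is correct and is essentially the paper's route: apply the uniform bound along $(t_n)$, use $\proba{\limsup_n A_n}\ge\limsup_n\proba{A_n}$, note that the event $\set{X_{t_n}\le 0\text{ i.o.}}$ lies in the germ field, and conclude by Blumenthal's zero--one law; the paper packages exactly this argument (stated contrapositively, with the zero--one law supplying constancy of the liminf) as Lemma \ref{lemma:simpleAndMoreGeneralSato}. Your reduction of the first assertion to the centred case via $\tilde X_t=X_t-\alpha t\ge X_t$ is also fine.

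The first assertion, however, is where the content lies, and it is not proved. Your plan hinges on the uniform inequality $\esp{(X_t^-)^2}\le 4\paren{\esp{\abs{X_t}}}^2$, i.e.\ $\esp{(X_t^-)^2}\le 16\paren{\esp{X_t^-}}^2$, which you explicitly leave open; by Cauchy--Schwarz, $\paren{\esp{X_t^-}}^2\le \proba{X_t<0}\,\esp{(X_t^-)^2}$, so this inequality is at least as strong as the conclusion $\proba{X_t\le 0}\ge 1/16$ and nothing has been reduced to anything easier. The heuristics offered in its support are also not available under the lemma's hypotheses: no infinite-variation assumption is made (the lemma is later applied to $X^{\text{pos}}$, which may have finite variation), and in that case $\imf{\Psi}{-\lambda}$ grows only linearly, not superlinearly. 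Finally, the fallback ``quote Sato'' mis-identifies what is being cited: Proposition 46.8 of Sato is the probability bound itself, and its proof (mirrored in the paper's EI analogue, Lemma \ref{lemma:lowerBoundProbabilityEI}) is not a second-moment reverse H\"older estimate but a Paley--Zygmund argument on exponential moments. Namely, for $\lambda>0$ one has $\esp{e^{-\lambda X_t}}-1\le \esp{e^{-\lambda X_t};X_t\le 0}$, whence by Cauchy--Schwarz $\proba{X_t\le 0}\ge \paren{\esp{e^{-\lambda X_t}}-1}^2/\esp{e^{-2\lambda X_t}}$; spectral positivity together with $\alpha\le 0$ gives, termwise in the L\'evy--Khintchine formula, the Laplace-exponent inequality $\imf{\Psi}{-2\lambda}\le 4\,\imf{\Psi}{-\lambda}$, i.e.\ $\esp{e^{-2\lambda X_t}}\le \esp{e^{-\lambda X_t}}^4$; choosing $\lambda=\lambda_t$ with $\esp{e^{-\lambda_t X_t}}=2$ then yields $(2-1)^2/2^4=1/16$. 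That exponential step (and the verification that it only needs $\alpha\le 0$) is the missing idea; without it, or a genuine proof of your moment inequality with the exact constant $4$, the first half of the lemma remains unestablished.
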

The first statement is Proposition 46.8 in \cite{MR1739520}; 
the proof is simple and  based on inequalities of the Paley-Zygmund type 
(based on exponentials of $X^1$) and on properties of the Laplace exponent. 
However, it should be noted that the theorem cannot hold for all $\alpha$ (consider $\alpha\to\infty$, so that $\proba{X_t\leq 0}\to 0$), and that the proof is valid when $\alpha\leq 0$. 
The second statement is found in penultimate paragraph of the proof of Theorem 47.1 of \cite{MR1739520} and basically follows from the Borel-Cantelli lemma and the first part of Lemma \ref{SatosResultForSPLP}; however, in the proof, one uses that $\alpha\leq 0$, so that it remains valid. 
We reprove the lemma in the EI setting using Lemma \ref{lemma:simpleAndMoreGeneralSato} below. 

\begin{proof}[Proof of Theorem 1 for totally asymmetric L\'evy processes]
As before, we restrict ourselves to the spectrally positive case with jumps bounded by $1$. 

Note in particular, that the above result implies that $\imf{\underline D}{X}\leq 0$ for any spectrally positive L\'evy as in its statement. On the other hand, by absolute continuity, we see that $\imf{\underline D}{X}$ takes the same constant value both under $\p$ and under $\q$. Hence, we can write
\[
\underline D(X)=\alpha_\theta+ \underline D(\tilde X)
\]where $\tilde X$ is a spectrally positive L\'evy process of characteristics $(0,\sigma, \pi_\theta)$. 
By the preceeding lemma, $\underline D(\tilde X)\leq 0$. As we remarked, $\alpha_\theta\to -\infty$ as $\theta\to -\infty$, so that $\underline D(X)=-\infty$. 
We deduce that for all $\alpha\in\re$, $\underline D(X+\alpha\id)=-\infty$. 
\end{proof}


\subsection{The general case}
\label{subsection:RogozinForLPGeneralCase}
Let $X$ be a L\'evy process of infinite variation and bounded jumps. 
If suffices to prove $\imf{\underline D}{X}=-\infty$ for any such process and then apply this to  $-X$ to conclude that also $\imf{\overline D}{X}=\infty$. 
Using the L\'evy-It\^o decomposition, we write it as $X=X^\text{neg}+X^\text{pos}$ where $X^i$ are independent L\'evy processes, where $X^\text{neg}$ is spectrally negative and $X^\text{pos}$ is spectrally positive; this can be achieved with $\esp{X^\text{pos}_t}=0$ so that Lemma \ref{SatosResultForSPLP} applies. 
In particular, from Theorem \ref{RogozinForEITheorem} for totally asymmetric L\'evy processes (proved in Subsection \ref{subsection:sprogozinLP}),$\imf{\underline D}{X^\text{neg}}=-\infty$. 
Hence, there exists a random sequence $V_n\downarrow 0$ such that $X^\text{neg}_{V_n}\leq -n V_n$. 
Since $X^\text{neg}$ is independent of $X^\text{pos}$ and $X^\text{pos}$ is spectrally positive, Lemma \ref{SatosResultForSPLP} implies that $\liminf_{n\to\infty} X^\text{pos}_{V_n}/V_n\leq 0$. We then conclude that
\[
\liminf_{t\to 0} \frac{X_t}{t}
\leq \liminf_{n\to\infty} \frac{X_{V_n}}{V_n}
\leq  \liminf_{n\to \infty} \frac{X^\text{pos}_{V_n}}{V_n}-n
=-\infty. 
\]


\section{Dini derivatives of EI processes in the totally asymmetric case}
\label{totallyAsymmetricDiniSection}

In this section, we prove Theorem \ref{RogozinForEITheorem}; 
it suffices to prove it for extremal EI process and obtain the general case by mixing. 
For concreteness, we assume that $X$ is extremal and only has positive jumps, so that $\beta_i\geq 0$. 

We first show that Dini derivatives are constant. 
\begin{proposition}\label{prop:ZeroOneLawForDiniDerivative}
Let $X$ be an extremal  EI process of parameters $(\alpha, 0,\beta)$. 
Then
\[
\imf{\underline D}{X}=c_1\quad\text{and}\quad\imf{\overline D}{X}=c_2
\]%
for some constants $c_1,c_2\in [-\infty,\infty ]$.
\end{proposition}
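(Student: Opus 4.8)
The plan is to exhibit a zero-one law for the Dini derivatives of an extremal EI process $X$ with parameters $(\alpha,0,\beta)$, playing the role that the Blumenthal $0$-$1$ law plays for L\'evy processes. The key point is that $\imf{\overline D}{X}$ and $\imf{\underline D}{X}$ are measurable functions of the "germ" of the path at $0$, and the germ $\sag$-field of an extremal EI process is degenerate. First I would recall Kallenberg's representation in the extremal case: $X_t=\alpha t+\sum_{i\geq 1}\beta_i(\indi{U_i\leq t}-t)$ with $(U_i)$ i.i.d.\ uniform on $[0,1]$ (the term $\sigma b_t$ being absent since $\sigma=0$). The quantities $\imf{\overline D}{X}$ and $\imf{\underline D}{X}$ are $\limsup$ and $\liminf$ as $h\downarrow 0$ of $X_h/h$, hence for every $\eps>0$ they are $\sag{X_s:s\leq\eps}$-measurable, so both are measurable with respect to $\F_{0+}:=\bigcap_{\eps>0}\sag{X_s:s\leq\eps}$.

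Next I would argue that $\F_{0+}$ is $\p$-trivial for an extremal EI process. One clean route: for fixed $\eps\in(0,1)$, condition on which uniforms $U_i$ fall in $[0,\eps]$ and on their values; given this data, the restriction of $X$ to $[0,\eps]$ is determined, while the contribution to the germ at $0$ of the jumps outside $[0,\eps]$ is just the linear drift $-(\sum_i\beta_i)\,t$ plus the finitely-many-jump part, which is $C^1$ near $0$ and so does not affect $\overline D,\underline D$. More systematically, I would use the exchangeability/Hewitt-Savage style argument: approximate $X$ by its dyadic skeleton increments $X_{(k+1)2^{-n}}-X_{k2^{-n}}$, which are exchangeable for each $n$; a tail-type event depending only on arbitrarily short initial time intervals is invariant under finite permutations of these increments in the limit, and the Hewitt-Savage $0$-$1$ law (valid for exchangeable, in particular for mixtures only after conditioning, but here $X$ is extremal so the skeleton is genuinely exchangeable with a trivial shift-invariant $\sag$-field) forces any such event to have probability $0$ or $1$. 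Either way, $\F_{0+}$ is trivial, so $\imf{\overline D}{X}$ and $\imf{\underline D}{X}$ are a.s.\ equal to constants $c_2,c_1\in[-\infty,\infty]$ respectively.

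The main obstacle is making the triviality of $\F_{0+}$ rigorous in the EI (non-Markovian) setting, since one cannot simply invoke Blumenthal: the jumps outside $[0,\eps]$ are correlated with those inside through the total mass constraint $\sum\beta_i(\indi{U_i\leq t}-t)$. The cleanest fix, which I would adopt, is the conditioning argument of the previous paragraph: write $X_t = (\alpha - \sum_i\beta_i)t + \sum_i\beta_i\indi{U_i\le t}$, fix $\eps$, and condition on $\mathcal{N}_\eps:=\{(i,U_i): U_i>\eps\}$; conditionally, the points in $[0,\eps]$ form an exchangeable configuration whose germ at $0$ is unaffected by $\mathcal{N}_\eps$ except through a $C^1$ correction, and one applies Hewitt-Savage to the conditional law. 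Letting $\eps\downarrow 0$ and using a martingale-convergence/Lévy $0$-$1$ argument upgrades this to triviality of $\F_{0+}$. I would also remark that this proposition is exactly the EI analogue of the sentence in the L\'evy section ("L\'evy processes satisfy the Blumenthal $0$-$1$ law and therefore $\imf{\overline D}{X}$ and $\imf{\underline D}{X}$ are constant"), and that once it is established, Theorem \ref{RogozinForEITheorem} reduces to showing $c_2=\infty$ and $c_1=-\infty$, which is the content of the subsequent sections.
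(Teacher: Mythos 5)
Your overall idea --- a zero--one law at time $0^+$ forcing the Dini derivatives to be constant --- is the same as the paper's, but both routes you offer to triviality have genuine gaps. The first route rests on writing $X_t=(\alpha-\sum_i\beta_i)t+\sum_i\beta_i\indi{U_i\le t}$ and on the claim that the jumps falling outside $[0,\eps]$ contribute only a linear ($C^1$) correction near $0$. This requires $\sum_i\abs{\beta_i}<\infty$; in the infinite-variation case (the one relevant to Theorem \ref{RogozinForEITheorem}) the series defining $X$ converges only because of the compensation, the would-be drift $-\bigl(\sum_{i:U_i>\eps}\beta_i\bigr)t$ is a divergent series, and the proposed splitting is ill-defined. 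The second route (Hewitt--Savage applied to dyadic skeleton increments) does not work as stated: $\overline D(X)$ and $\underline D(X)$ are not measurable with respect to any single level-$n$ skeleton, the increments at a fixed level form a finite exchangeable --- not i.i.d. --- family (for an extremal process they even sum to the constant $X_1=\alpha$), so Hewitt--Savage does not apply to them, and the claimed invariance ``in the limit'' under finite permutations of skeleton increments is never justified. You are also aiming at a stronger statement than needed (triviality of the whole germ field $\F_{0+}$), which neither the proposition nor the paper requires.

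The repair --- and the paper's actual argument --- is to work with the i.i.d. sequence $(U_i)$ itself rather than with the germ field of the path. Removing the first $k$ jump times changes $X_t/t$ near $0$ only by a deterministic constant: for $t<\min\{U_1,\dots,U_k\}$ one has $X_t/t=X^{(-k)}_t/t-\sum_{i\le k}\beta_i$, where $X^{(-k)}$ is built from $(U_j)_{j>k}$ alone. Hence $\overline D(X)$ and $\underline D(X)$ are, up to deterministic constants, measurable with respect to $\sag{U_{k+1},U_{k+2},\dots}$ for every $k$, i.e. tail functions of the i.i.d. sequence $(U_i)$ (joined, when $\sigma>0$, with the germ field of the bridge, which is trivial by local absolute continuity with Brownian motion and Blumenthal), and a Kolmogorov-type zero--one argument gives the constancy. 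Equivalently, one may apply Hewitt--Savage directly to $(U_i)$: a finite permutation of the $U_i$ leaves the path unchanged on an initial interval, so the Dini derivatives are exchangeable functions of $(U_i)$. Either formulation closes the gap, and no conditioning on the configuration outside $[0,\eps]$ and no skeleton approximation is needed.
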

\begin{proof}
Fix any $k\in \na$, and define $X^{(-k)}_t=\alpha t+\sigma b_t+\sum_{j=k+1}^\infty \beta_j\bra{\indi{U_j\leq t}-t}$. 
Hence, for any $t<\min\{U_i:i\in [k] \}$ we have
\begin{lesn}
	\frac{X_t}{t}=\frac{X^{(-k)}_t}{t}-\sum_1^k\beta_i,
\end{lesn}which implies
\begin{equation}
\label{equation:DiniDerivativesForZeroOneProposition}
	\varlimsup_{t\downarrow 0}\frac{X_t}{t}=\varlimsup_{t\downarrow 0}\frac{X^{(-k)}_t}{t}-\sum_1^k\beta_i,
\end{equation}for any $k$. 
Let
\[
\G=\bigcap_{\eps\in (0,1)}\sag{ b_s:s\leq \eps}. 
\]The (local) absolute continuity of the Brownian bridge with respect to Brownian motion, and the Blumenthal zero-one law for the latter imply that $\G$ is trivial. 
Let $\F_k= \sag{U_k,U_{k+1},\ldots}$ and note that $\G$ is independent of (any \sa\ but in particular) $\F_k$. 
As noted in the proof of \cite[Prop. I\S2.4]{MR1406564}, the argument for Kolmogorov's zero one law tells us that $\bigcap_k \G\vee\F_k$ is trivial. 
Since the right-hand side of \eqref{equation:DiniDerivativesForZeroOneProposition} is $\G\vee \F_k$-measurable, we deduce that the left-hand side is $\bigcap_k \G\vee\F_k$-measurable and therefore trivial. 
A similar argument works for the lower Dini derivative. 
\end{proof}

We will proceed as in the case of L\'evy processes: 
we first give a change of measure for EI processes, 
analogous to the Esscher transformation, 
which has the effect of transforming the drift and the jumps. 
As in the L\'evy case, $\overline{D}(X)=\infty$ follows from simple results of the literature. 
We then use martingale arguments to prove that $\underline{D}(X)\leq 0$. 
Finally, our change of measure will imply that $\underline{D}(X)=-\infty$. 

\begin{proposition}[Change of measure]
\label{changeOfMeasureProposition}
Let $X=(X_t,t\in [0,1])$ be an extremal EI process with (deterministic) characteristics $(\alpha, \sigma,\beta)$, defined on the probability space $\ofp$. 
Then
\[
\esp{e^{\theta X_t}}=e^{\theta^2\sigma^2 t(1-t)/2}\prod_{j=1}^\infty \bra{e^{-\theta\beta_j t}\bra{1-t}+e^{\theta\beta_j (1-t)}t}<\infty
\]for all $t\in [0,1]$ and $\theta\in\re$.  

Fix $T\in (0,1)$, $\theta\in\re$ and let $\F_t=\sag{X_s:s\leq t}$. 
Define $\q$ on $\F_T$ by\begin{esn}
\imf{\q}{A}=\frac{\imf{\se_\p}{\indi{A}e^{\theta X_T}}}{\esp{e^{\theta X_t}}}.
\end{esn}Under $\q$, the stochastic process $(X_t,t\leq T)$ is an EI process whose (random) characteristics $(\alpha^\theta,\sigma,\beta^\theta)$ have the following law. 
Let $(B_j)$ be independent Bernoulli random random variables with parameter $p_j$ given by
\begin{equation}
\label{p_jDef}
p_j=\frac{Te^{\theta b_j}}{Te^{\theta b_j}+(1-T)}. 
\end{equation}Then 
\[
\alpha^\theta=\alpha T+\theta\sigma+\sum_{j}\beta_j \bra{B_j-T}
\quad\text{and}\quad
\beta^\theta_j=\beta_j B_j,
\]where $\sum_{j}\beta_j \bra{B_j-T}$ converges almost surely and in $L_1$. 

If $\alpha=0$ and either $\sigma>0$ or $\sum_i\indi{\beta_i\neq 0}=\infty$ 
then $\esp{e^{\lambda X_t}}\to\infty$ as $\lambda\to\infty$. 
\end{proposition}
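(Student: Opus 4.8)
\emph{The plan} is to extract $\esp{e^{\lambda X_t}}$ from the product formula in the first part of the proposition and show that, under either hypothesis, one of its two factors (the Gaussian factor or the infinite product) already diverges. Fix $t\in(0,1)$; for $t\in\set{0,1}$ one has $X_t\equiv 0$ (recall $\alpha=0$), so there is nothing to prove. With $\alpha=0$ the first part gives, for every $\lambda\in\re$,
\[
\esp{e^{\lambda X_t}}=e^{\lambda^2\sigma^2 t(1-t)/2}\prod_{j\geq 1}f_j(\lambda),\qquad f_j(\lambda):=e^{-\lambda\beta_j t}(1-t)+e^{\lambda\beta_j(1-t)}t,
\]
the product being convergent, hence finite. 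The first thing to record is that every factor satisfies $f_j(\lambda)\geq 1$ for all $\lambda\in\re$: indeed $f_j(\lambda)=\esp{e^{\lambda\beta_j(\indi{U_j\leq t}-t)}}$ and the increment $\beta_j(\indi{U_j\leq t}-t)$ is centered, so $f_j(\lambda)\geq 1$ by Jensen's inequality. In particular $\prod_{j\geq 1}f_j(\lambda)\geq 1$ for every $\lambda$, and if $\sigma>0$ we are done at once, since then $\esp{e^{\lambda X_t}}\geq e^{\lambda^2\sigma^2 t(1-t)/2}\to\infty$.

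Suppose now $\sigma=0$ and that $\set{j:\beta_j\neq 0}$ is infinite, so that $\esp{e^{\lambda X_t}}=\prod_{j\geq 1}f_j(\lambda)$. For a fixed $j$ with $\beta_j\neq 0$, the map $\lambda\mapsto f_j(\lambda)$ is convex with $f_j'(0)=\esp{\beta_j(\indi{U_j\leq t}-t)}=0$, hence nondecreasing on $[0,\infty)$; moreover $f_j(\lambda)\to\infty$ as $\lambda\to\infty$, since $f_j(\lambda)\geq e^{\lambda\beta_j(1-t)}t$ when $\beta_j>0$ and $f_j(\lambda)\geq e^{\lambda|\beta_j|t}(1-t)$ when $\beta_j<0$. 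Consequently $\lambda_j:=\inf\set{\lambda\geq 0:f_j(\lambda)\geq 2}$ is finite and $f_j(\lambda)\geq 2$ for all $\lambda\geq\lambda_j$.

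To conclude I would fix $N\in\na$, choose $N$ distinct indices $j_1,\ldots,j_N$ with $\beta_{j_k}\neq 0$ (possible since there are infinitely many), and observe that for $\lambda>\max_{1\leq k\leq N}\lambda_{j_k}$,
\[
\esp{e^{\lambda X_t}}=\prod_{j\geq 1}f_j(\lambda)\;\geq\;\prod_{k=1}^{N}f_{j_k}(\lambda)\;\geq\;2^{N},
\]
the first inequality using $f_j\geq 1$ for the remaining indices. Hence $\liminf_{\lambda\to\infty}\esp{e^{\lambda X_t}}\geq 2^N$, and letting $N\to\infty$ proves the claim. The argument is otherwise routine; the one point requiring care is precisely this last step: since the $\beta_j$ may decay to $0$ rapidly there is no uniform lower bound on the thresholds $\lambda_j$, so one cannot estimate the full product term by term and must instead exhaust it finitely many factors at a time. (If the statement is also intended as $\lambda\to-\infty$, the same proof applies after replacing each $\beta_j$ by $-\beta_j$, which preserves both hypotheses.)
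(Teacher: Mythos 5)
Your proposal addresses only the final sentence of the proposition (the divergence $\esp{e^{\lambda X_t}}\to\infty$ as $\lambda\to\infty$), and even that fragment is argued by taking the product formula $\esp{e^{\theta X_t}}=e^{\theta^2\sigma^2 t(1-t)/2}\prod_j\bra{e^{-\theta\beta_j t}(1-t)+e^{\theta\beta_j(1-t)}t}$ as given, although that formula (together with its finiteness) is itself part of what must be proved. The core of the proposition is missing entirely: you do not show that under $\q$ the process $(X_t,t\leq T)$ is again an EI process, nor that its random characteristics are $(\alpha^\theta,\sigma,\beta^\theta)$ with $\alpha^\theta=\alpha T+\theta\sigma+\sum_j\beta_j[B_j-T]$ and $\beta^\theta_j=\beta_jB_j$ for independent Bernoulli variables $B_j$ with success probabilities \eqref{p_jDef}, nor that the series $\sum_j\beta_j[B_j-T]$ converges almost surely and in $L_1$. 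In the paper these are the substantive steps: finiteness of the moment generating function is obtained via Fatou plus a uniform-integrability argument based on log-convexity (H\"older), the convergence of $\sum_j\beta_j[B_j-T]$ is a Kolmogorov three-series argument using $\sum_j\beta_j^2<\infty$, and the identification of the law under $\q$ is carried out by computing the joint Laplace transforms of the increments $(X_{t_1},\ldots,X_{t_n})$ under $\q$ and matching them, factor by factor in $j$, with those of the explicitly constructed process $X^\theta_t=\alpha^\theta t/T+\theta\sigma b_{t/T}+\sum_j\beta_jB_j\bra{\indi{U_j\leq t/T}-t/T}$. None of this can be bypassed, since it is precisely this description that is used later to write $\underline D(X)=\alpha^\theta/T+\underline D(Y^\theta)$.

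On the fragment you do treat: the argument is correct and, granted the product formula, is arguably more elementary than the paper's route. You use $f_j\geq 1$ (Jensen, centered increments), $f_j(\lambda)\to\infty$ for each $j$ with $\beta_j\neq 0$, and exhaust the product finitely many factors at a time; the paper instead invokes the general fact that a finite moment generating function of $\xi$ tends to $\infty$ at $+\infty$ whenever $\proba{\xi>0}>0$, and verifies $\proba{X_t>0}>0$ via the martingale $(X_t/(1-t),t<1)$ together with Knight's result that $X_t$ has no atoms under the standing hypotheses. Your finite-exhaustion remark (no uniform lower bound on the thresholds $\lambda_j$) is well taken, and the $\sigma>0$ case is immediate in both treatments. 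But as a proof of the proposition as stated, the proposal has a genuine gap: the change-of-measure statement itself, the product formula it relies on, and the convergence of the drift series are all left unproved.
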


\begin{remark}
When $X$ is a finite variation EI process, driven by the two parameters $(\tilde \alpha,\beta)$ (rather than $(\alpha,0,\beta)$, ) as explained in Section \ref{resultsSection,whichIsIntroduction}, then $X$ under $\q$ is also of finite variation and is driven by the two parameters $(\tilde\alpha T,\beta^\theta)$. 
Hence, $\imf{\overline D}{X^\theta}=\imf{\underline D}{X^\theta}=\tilde\alpha$, which does not depend on $\theta$; the interpretation is that, in this case, the change of measure is not adding a drift. 
If we choose not to reparametrize with $(\tilde\alpha,\beta^\theta)$, the finite variation case is characterized by the fact that $\alpha^\theta$ is bounded in $\theta$. On the other hand, if $X$ is of infinite variation, we shall see that $\alpha^\theta$ stochastically increases from $-\infty$ to $\infty$. 
\end{remark}

\begin{proof}
Let us begin with the finitude of the moment generating function of $X$. 
Use the canonical representation $X_t=\alpha t+\sigma b_t+\sum_i\beta_j\bra{\indi{U_j\leq t}-t}$. 
Define
\[
\imf{\phi}{t,\theta,\beta}=e^{-\theta\beta_j t}\bra{1-t}+e^{\theta\beta_j (1-t)}t.
\]
Note that, 
\[ 
\esp{e^{\theta\bra{\beta_j\indi{U_j\leq t}-t}}}
=\imf{\phi}{t,\theta,\beta_j}. 
\]For fixed $t,\theta$, we have\[
\imf{\phi}{t,\theta,\beta}=1+\frac{1}{2}  \left(\theta ^2 t-\theta ^2 t^2\right)\beta^2+O\left(\beta^3\right)
\]as $\beta\to 0$. 
Therefore, $\prod_{j=1}^\infty   \imf{\phi}{t,\theta,\beta_j}$ exists, since $\beta$ is square summable. 
By Fatou's lemma, we see that
\[
\esp{e^{\theta X_t}}
\leq e^{\alpha \theta t+\theta^2\sigma^2 t(1-t)} \lim_{n\to\infty} 
\prod_{j\leq n} \esp{e^{\theta\bra{\beta_j\indi{U_j\leq t}-t}}}
\leq e^{\alpha \theta t+\theta^2\sigma^2 t(1-t)} 
\prod_{j=1}^\infty   \imf{\phi}{t,\theta,\beta_j}<\infty. 
\]But then, H\"older's inequality implies the log-concavity of $\theta\mapsto \esp{e^{\theta X_t}}$, which is then enough to obtain uniform integrability of the sequence $X^n=X-\sum_{j>n} \beta_j\bra{\indi{U_j\leq t}-t}$, which then implies the stated infinite product formula for the moment generating function of $X_t$. 

Consider now a sequence $V=(V_j)$ of independent uniform $(0,1)$ random variables, which is independent of $b$ and the $(U_j)$  and define $B_j=\indi{V_j\leq p_j}$. 
Note that obviously $\sum \beta_j^2 B_j^2<\infty$. 
Regarding $\alpha^\theta$, we use the Kolmogorov three series theorem. 
Indeed, since the sequence $(\beta_j)$ is bounded, so is the sequence $(\beta_j \bra{B_j-T})$. 
On the other hand, we have
\[
\esp{\sum_{j=1}^n\beta_j \bra{B_j-T}}
=\sum_{j=1}^n T(1-T)\beta_j\frac{e^{\theta\beta_j}-1}{Te^{\theta \beta_j}+(1-T)}
=\imf{O}{\sum_{j=1}^\infty \beta_j^2}.
\]Finally, we see that
\[
\var{\sum_{j=1}^n\beta_j \bra{B_j-T}}=\sum_{j=1}^n\beta_j^2 p_j(1-p_j)\leq \sum_{j=1}^\infty \beta_j^2. 
\]

Consider also the process
\[
X^\theta_t=\alpha^\theta t/T+ \theta\sigma b_{t/T}+ \sum_j \beta_jB_j\bra{\indi{U_j\leq t/T}-t/T}
\]defined on $[0,T]$. Note that $X^\theta$ is an EI process on $[0,T]$ with random characteristics $(\alpha^\theta,\sigma^\theta,\beta^\theta)$; we finish the proof by comparing, through moment generating functions, 
the finite-dimensional distributions of the increments of $X$ under $\q$ and of $X^\theta$ (under $\p$). 

First of all, by independence of $U$ and $b$; since the law of $b$ under $e^{\theta b_T}\cdot \p$ equals that of $b+\sigma\theta \id$ (as can be proved through the Gaussian character of $b$), it suffices to prove the theorem when $\sigma=0$. Since $\alpha$ is deterministic, it also suffices to consider $\alpha=0$. 

Let $0=t_0\leq t_1\leq\cdots\leq t_n=T$ and $\lambda_1\cdots,\lambda_n\in\re$. 
Using similar arguments as for justifying the exchange of expectation and infinite products in the computation of the generating function, we first see that
\begin{align*}
&\imf{\se}{\prod_{i=1}^n e^{\lambda_i\bra{ X_{t_i}-X_{t_{i-1}}}}e^{\theta X_T}}
\\&=\prod_j \imf{\se}{ \prod_{i=1}^ne^{\lambda_i\bra{ \alpha (t_i-t_{i-1})+\sigma (b_{t_i}-b_{t_{i-1}})+\beta_j\indi{t_{i-1}\leq U_j\leq t_i} }}e^{ \theta \bra{ \alpha T+\sigma b_T+\beta_j\indi{ U_j\leq T} } }}.
\end{align*}Therefore, the Laplace transform of $(X_{t_1},\ldots, X_{t_n})$ is finite under $\q$ and
\begin{align*}
\imf{\se_\q}{\prod_{i=1}^n e^{\lambda_i\bra{ X_{t_i}-X_{t_{i-1}}}}}
&=\frac{1}{\imf{\se_\p}{e^{\theta X_T}}}\imf{\se_\p}{\prod_{i=1}^n e^{\lambda_i\bra{ X_{t_i}-X_{t_{i-1}}}}e^{\theta X_T}}
\\&=\frac{1}{\imf{\se_\p}{e^{\theta X_T}}}\prod_{j=1}^\infty \imf{\se_\p}{\prod_{i=1}^n e^{\lambda_i\beta_j[  \indi{U_j\in [t_{i-1},t_i]}-(t_i-t_{i-1}) ]}e^{\theta \beta_j[ \indi{U_j\in [0,T]}-T]}}. 
\end{align*}By considering the interval of the partition $[t_{i-1},t_i]$ on which $U_j$ falls, and recalling the definition of $p_j$ in \eqref{p_jDef}, we get\begin{align*}
&\imf{\se_\q}{\prod_{i=1}^n e^{\lambda_i\bra{ X_{t_i}-X_{t_{i-1}}}}}
\\&=\frac{1}{\imf{\se_\p}{e^{\theta X_T}}}\prod_{j=1}^\infty e^{-\theta\beta_jT-\beta_j \sum_{i=1}^n\lambda_i(t_i-t_{i-1})}\bra{(1-T)+\sum_{i=1}^n (t_i-t_{i-1}) e^{(\lambda_i+\theta)\beta_j} }
\\&=\prod_{j=1}^\infty e^{-\beta_j \sum_{i=1}^n\lambda_i(t_i-t_{i-1})}\bra{(1-p_j)+\sum_{i=1}^n\frac{t_i-t_{i-1}}{T}p_j  e^{\lambda_i\beta_j} }. 
\end{align*}

Recall that $\alpha$ and $\sigma$ are zero in the definition of $\alpha^\theta$ and $X^\theta$. 
On the other hand, using the definition of $X^\theta$, 
we can use the distributional assumptions on $B$ and $U$ (first independence, then conditioning on $B$, and finally considering the inverval $[t_{i-1},t_i)$ on which $U_j$ falls) to obtain
\begin{align*}
&\prod_j \esp{ \prod_{i=1}^n 
e^{\lambda_i \beta_j \bra{B_j-T}\frac{t_i-t_{i-1}}{T} +\lambda_i\beta_jB_j\bra{\indi{TU_j\in (t_{i-1},t_{i}) }-\frac{t_i-t_{i-1}}{T}} } 
}
\\&= \prod_j e^{-\sum_{i=1}^n\lambda_i\beta_j(t_i-t_{i-1})}  \esp{  \bra{ \paren{1-p_j}+p_j e^{\sum_{i=1}^n\lambda_i\beta_j\indi{TU_j\in (t_{i-1},t_i)}} }}
\\&=\prod_j  e^{-\sum_{i=1}^n\lambda_i\beta_j(t_i-t_{i-1})}  \bra{(1-p_j)+p_j\sum_{i=1}^n e^{\lambda_i\beta_j } \frac{t_i-t_{i-1}}{T} }
\end{align*}The preceding equation shows that the increments of the left-hand side 
have the same law as the corresponding increments of $\beta_j[\indi{U_j\leq \cdot}-\cdot]$ under $\q$, for every $j$. 
Thus, 
\[
\prod_j \esp{ \prod_{i=1}^n 
e^{\lambda_i \beta_j \bra{B_j-T}\frac{t_i-t_{i-1}}{T} +\lambda_i\beta_jB_j\bra{\indi{TU_j\in (t_{i-1},t_{i}) }-\frac{t_i-t_{i-1}}{T}} } 
}=\imf{\se_\q}{\prod_{i=1}^n e^{\lambda_i\bra{ X_{t_i}-X_{t_{i-1}}}}}<\infty.
\]
Similarly as in the proof of Proposition \ref{changeOfMeasureProposition}, using Fatou's lemma and H\"older's inequality we deduce
\[
\esp{\prod_{i=1}^ne^{\lambda_i [X^\theta_{t_i}-X^\theta_{t_{i-1}}]}}=\prod_j \esp{ \prod_{i=1}^n 
e^{\lambda_i \beta_j \bra{B_j-T}\frac{t_i-t_{i-1}}{T} +\lambda_i\beta_jB_j\bra{\indi{TU_j\in (t_{i-1},t_{i}) }-\frac{t_i-t_{i-1}}{T}} } 
}. 
\]Hence the finite-dimensional distributions of $X$ under $\q$ and $X^\theta$ under $\p$ are the same. 

The last part of the statement follows from the fact that if $\xi$ is any random variable on $\re$ with finite generating function $g$, then $g(\infty)=\infty$ whenever $\proba{\xi>0}>0$. 
The hypotheses on $X$ are chosen so that $\proba{X_t>0}>0$ for all $t\in (0,1)$. 
Indeed, when $\alpha=0$, $(X_t/(1-t),t<1)$ is a martingale; the assumption $\proba{X_t>0}=0$ implies $\esp{X_t^-}=0$ which then gives $\proba{X_t=0}=1$ and our hypotheses imply that $X_t$ has no atoms for $t\in (0,1)$ as shown in the proof of Lemma 1.2 in \cite{MR1417982}. 
\end{proof}

We now consider the behavior of the drift $\alpha^\theta$ as a function of $\theta$. 
\begin{proposition}
\label{propositionPropertiesOfAlphaTheta}
%
The mapping $\theta\mapsto \alpha^\theta$ is stochastically non-decreasing.  
Also, $\alpha^\theta\in L_1$, $\theta\mapsto\esp{\alpha^\theta}$ is continuous and strictly increasing and, if $X$ is of infinite variation,  $\esp{\alpha^\theta}\to\pm\infty$ as $\theta\to\pm\infty$. 
\end{proposition}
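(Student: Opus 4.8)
The plan is to read all four assertions off the explicit description of $\alpha^\theta$ given by Proposition \ref{changeOfMeasureProposition}: with $(V_j)$ the i.i.d.\ uniform sequence appearing there and $B_j=\indi{V_j\le p_j}$, $p_j=p_j(\theta)=Te^{\theta\beta_j}/\bra{Te^{\theta\beta_j}+(1-T)}$, one has $\alpha^\theta=\alpha T+\theta\sigma+\sum_j\beta_j\bra{B_j-T}$, the series converging a.s.\ and in $L_1$; in particular $\alpha^\theta\in L_1$ is immediate, and recall $\beta_j\ge0$ throughout this section. The first move is to couple all the $\alpha^\theta$, $\theta\in\re$, on a single probability space by using the \emph{same} realisation of $(V_j)$ for every $\theta$. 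It is also convenient to record the identity $\esp{\alpha^\theta}=\imf{\se_{\q}}{X_T}=\partial_\theta\log\esp{e^{\theta X_T}}$: indeed $X^\theta_T=\alpha^\theta$ because $b_1=0$ and $\indi{U_j\le1}=1$, $X$ under $\q$ has the finite-dimensional distributions of $X^\theta$, and differentiation under the integral is licit since the moment generating function of $X_T$ is finite on all of $\re$. Thus $\theta\mapsto\esp{\alpha^\theta}$ is the derivative of a finite convex function, a useful cross-check for what follows.

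For \emph{stochastic monotonicity}: since $\beta_j\ge0$, each $p_j(\cdot)$ is non-decreasing, so in the coupling every $B_j(\theta)=\indi{V_j\le p_j(\theta)}$ is a non-decreasing step function of $\theta$, for each $\omega$; hence every finite partial sum $\sum_{j\le n}\beta_j\bra{B_j(\theta)-T}$ is non-decreasing in $\theta$. Fixing $\theta_1<\theta_2$ and intersecting the two a.s.-convergence events, I let $n\to\infty$ to obtain $\alpha^{\theta_1}-\theta_1\sigma\le\alpha^{\theta_2}-\theta_2\sigma$ a.s., hence $\alpha^{\theta_1}\le\alpha^{\theta_2}$ a.s.; this monotone coupling says precisely that $\theta\mapsto\alpha^\theta$ is stochastically non-decreasing. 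Note one must pass to the limit through the partial sums, because the series is only conditionally convergent when $\sum_j\beta_j=\infty$.

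For the \emph{mean}: term-by-term expectation (legitimate by $L_1$-convergence) gives $\esp{\alpha^\theta}=\alpha T+\theta\sigma+\sum_j f_j(\theta)$ with $f_j(\theta)=T(1-T)\beta_j\bra{e^{\theta\beta_j}-1}/\bra{Te^{\theta\beta_j}+(1-T)}$. From $\abs{e^x-1}\le\abs xe^{\abs x}$ and the fact that $Te^{\theta\beta_j}+(1-T)$ is bounded below on compacts one gets $\abs{f_j(\theta)}\le K_M\beta_j^2$ for $\abs\theta\le M$; since $\sum_j\beta_j^2<\infty$ the series converges locally uniformly, so $\theta\mapsto\esp{\alpha^\theta}$ is continuous. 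Each $f_j$ is non-decreasing (with $u=e^{\theta\beta_j}$, the map $u\mapsto(u-1)/(Tu+1-T)$ has positive derivative) and strictly increasing when $\beta_j>0$, while $\theta\mapsto\theta\sigma$ is strictly increasing when $\sigma>0$; as $X$ is non-degenerate (so $\sigma>0$ or some $\beta_j>0$), at least one summand is strictly increasing and so is $\esp{\alpha^\theta}$ --- alternatively, $\theta\mapsto\log\esp{e^{\theta X_T}}$ is strictly convex.

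For the \emph{limits at $\pm\infty$} under infinite variation: restrict to $\theta\ge0$, where $f_j(\theta)\ge0$ and $f_j(\theta)\uparrow\beta_j(1-T)$ as $\theta\uparrow\infty$; monotone convergence for series gives $\sum_j f_j(\theta)\uparrow(1-T)\sum_j\beta_j$, so $\esp{\alpha^\theta}\to+\infty$ exactly when $\sigma>0$ (via the $\theta\sigma$ term) or $\sum_j\beta_j=\infty$, i.e.\ exactly under the infinite-variation hypothesis. The case $\theta\to-\infty$ is symmetric, using that $-f_j(\theta)\uparrow\beta_j T$ as $\theta\downarrow-\infty$. The only genuinely delicate point in the whole argument is this bookkeeping with the conditionally convergent series $\sum_j\beta_j(B_j-T)$ in the $\sigma=0$, $\sum_j\beta_j=\infty$ regime: one cannot rearrange, so monotonicity must be pushed through finite partial sums and the limits at $\pm\infty$ must be taken by monotone convergence after restricting $\theta$ to a half-line on which the $f_j$ are sign-definite and monotone.
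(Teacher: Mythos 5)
Your proof is correct and follows essentially the same route as the paper: the monotone coupling $B_j=\indi{V_j\le p_j(\theta)}$ for stochastic monotonicity, the term-by-term expectation formula with the uniform $O(\beta_j^2)$ bound on compacts for continuity, term-wise strict monotonicity, and monotone convergence/Fatou for the limits at $\pm\infty$ under infinite variation. Your extra care with the conditionally convergent series and the cross-check via $\partial_\theta\log\esp{e^{\theta X_T}}$ are welcome refinements but do not change the argument.
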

\begin{proof}
We have already proved that $\alpha^\theta$ is a convergent series (plus the couple of constants $\alpha$ and $\theta\sigma$); it is absolutely divergent in the infinite variation case and otherwise absolutely convergent. 
Using our explicit construction of the random variables $B_j$ as $\indi{V_j\leq p_j}$ and the definiton of $p_j$ in \eqref{p_jDef},  we note that the $\beta_j [B_j-T]$ are increasing in $\theta$, which implies the same for $\alpha^\theta$. 

Recall that $\alpha^\theta$  is (modulo a constant) a series of independent random variables taking two values, 
whose means and variances are summable. Hence $\alpha^\theta\in L_1$ and 
\[
\esp{\alpha^\theta}=\alpha+\theta\sigma+ T(1-T)\sum_j \beta_j \frac{e^{\theta \beta_j}-1}{Te^{\theta \beta_j}+(1-T)}. 
\]The above summands are $\imf{O}{\theta \beta_j^2}$, uniformly for $\theta$ on compact sets. 
This implies the continuity of $\theta\mapsto\esp{\alpha^\theta}$. 
But the mapping\[
\theta\mapsto \beta_j \frac{{e^{\theta \beta_j}-1}}{{Te^{\theta \beta_j}+(1-T)}}
\]is strictly increasing, and monotone convergence inplies the same for $\theta\mapsto\esp{\alpha^\theta}$. 
Finally, note that the preceding function of $\theta$ goes to $ \pm\beta_j$ as $\theta\to\pm\infty$. 
When $X$ is of infinite variation,  Fatou's lemma can be applied to the series for $\esp{\alpha^\theta}$, as the summands in its definition it are either all positive or all negative, and conclude that $\esp{\alpha^\theta}\to \pm\infty$ as $\theta\to\pm\infty$. 
\end{proof}


We now give a version of Lemma \ref{SatosResultForSPLP} for EI processes, as well as a simple lemma which uses it.

\begin{lemma}
\label{lemma:lowerBoundProbabilityEI}
Let $X$ be an extremal EI process with parameters $(0,\sigma,\beta)$ such that $\beta_i\leq 0$ for all $i$. 
Then $\proba{X_t\geq 0}\geq 1/16$ for every $t\leq 1/2$. 
\end{lemma}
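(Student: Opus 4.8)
The target is the analogue of Proposition~46.8 of \cite{MR1739520} (the first part of Lemma~\ref{SatosResultForSPLP}): an extremal EI process $X$ with parameters $(0,\sigma,\beta)$ and $\beta_i\le 0$ for all $i$ satisfies $\proba{X_t\ge 0}\ge 1/16$ for $t\le 1/2$. The strategy mirrors the Paley--Zygmund argument behind Sato's bound but with the bridge-type structure of EI processes replacing the L\'evy structure. First I would reduce to a convenient normalization. By reversibility of EI processes, $(X_1-X_{(1-t)-},\,t\le 1)\deq X$; combined with $t\le 1/2$ this lets me symmetrize or at least exploit that the increment over $[0,t]$ and over $[t,1]$ are exchangeable-ish, but more to the point the hypothesis $\alpha=0$ means $(X_s/(1-s),\,s<1)$ is a martingale (as used at the end of the proof of Proposition~\ref{changeOfMeasureProposition}), so $\esp{X_t}=0$ for every $t<1$. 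Thus $X_t$ is a mean-zero random variable and we want a quantitative lower bound on $\proba{X_t\ge 0}$ that does not degenerate.

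\textbf{Key steps.} The workhorse is the Paley--Zygmund inequality in the form: for a random variable $Y$ with $\esp{Y}=0$,
\[
\proba{Y\ge 0}\ \ge\ \frac{\esp{Y^-}^2}{\esp{(Y^-)^2}}\ \cdot\ (\text{const}),
\]
or more robustly one controls $\proba{Y\ge 0}$ from below using $\esp{Y}=0$, $\esp{Y^2}$ and $\esp{|Y|^3}$ (or $\esp{Y^4}$), via $\esp{|Y|}\ge \esp{Y^2}^{3/2}/\esp{|Y|^3}^{1/2}$ and $\esp{Y^+}=\esp{Y^-}=\tfrac12\esp{|Y|}$. So the concrete task is: (i) write $X_t=\sigma b_t+\sum_j\beta_j(\indi{U_j\le t}-t)$, and compute, using independence of $b$ and the $U_j$'s, the second and fourth (or third absolute) moments of $X_t$; for the jump part, $\var{\sum_j\beta_j(\indi{U_j\le t}-t)}=t(1-t)\sum_j\beta_j^2$ and the higher moments are controlled by $\sum\beta_j^2$ and $\max_j|\beta_j|\le(\sum\beta_j^2)^{1/2}$ together with $t\le 1/2$; the Brownian-bridge part contributes $\sigma^2 t(1-t)$ to the variance with Gaussian higher moments. (ii) Plug these into Paley--Zygmund. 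The point of the constant $1/16$ is that it is uniform: for $t\le 1/2$ the ratio $\esp{X_t^2}^2/\esp{X_t^4}$ is bounded below by an absolute constant regardless of $(\sigma,\beta)$, precisely because a sum of independent bounded-by-$M$ terms with total variance $v$ has fourth moment $\le 3v^2+Mv\cdot(\text{stuff})$ and here $M\le v^{1/2}$, so the fourth moment is $O(v^2)$ with an absolute implied constant (this is where the degenerate "compound Poisson with one huge jump" case — excluded in Sato's lemma by boundedness of jumps — is handled here by $\beta$ square-summable forcing no single jump to dominate the variance). The Brownian term only helps (it is Gaussian, so it improves the moment ratio). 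Finally one checks the arithmetic gives exactly (or at least) $1/16$; if a crude Paley--Zygmund loses too much, one can instead invoke the Berry--Esseen-type / Chebyshev split used in Sato's original proof, but the cleanest route is a direct second-and-fourth-moment bound.

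\textbf{Main obstacle.} The delicate point is obtaining a \emph{uniform} constant, independent of how the variance is distributed among $\sigma^2$ and the $\beta_j^2$'s and independent of $t\in(0,1/2]$. In the L\'evy setting Sato uses that jumps are bounded by $1$ and the total variation/variance scales linearly in $t$; here the EI structure means the "time horizon'' is fixed at $1$ and the role of "jumps bounded by $1$'' is replaced by the square-summability of $\beta$, which by itself says each $|\beta_j|\le\|\beta\|_2$, so after rescaling by the standard deviation no single increment carries more than a bounded fraction of the variance — this is exactly the input Paley--Zygmund needs. Making this bookkeeping yield $1/16$ rather than merely \emph{some} positive constant, and checking the edge cases $\beta\equiv 0$ (pure Brownian bridge, where the bound is much better) and $\sigma=0$ with finitely many nonzero $\beta_j$ (where $X_t$ is a genuine finite sum and the ratio is still bounded below), is the part that requires care; I expect this to be straightforward but somewhat tedious moment estimation, and it is the step where one must be careful not to let the constant depend on $t$ or on the splitting of the variance.
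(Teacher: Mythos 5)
Your central quantitative claim fails, and it fails exactly at the point you flag as delicate. You assert that since $\max_j\abs{\beta_j}\le\paren{\sum_j\beta_j^2}^{1/2}$, no single term can dominate and hence $\esp{X_t^4}=\imf{O}{\esp{X_t^2}^2}$ with an absolute constant, uniformly in $t\le 1/2$. But the variance of the jump part is $v=t(1-t)\sum_j\beta_j^2$, not $\sum_j\beta_j^2$, so the bound "$M\le v^{1/2}$" is false for small $t$. Concretely, take $\sigma=0$ and a single jump $\beta_1=-1$: then $X_t=t$ with probability $1-t$ and $X_t=t-1$ with probability $t$, so $\esp{X_t^2}=t(1-t)$ while $\esp{X_t^4}=t(1-t)\bra{(1-t)^3+t^3}\asymp t$, and the ratio $\esp{X_t^2}^2/\esp{X_t^4}\asymp t\to 0$. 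A second/fourth-moment Paley--Zygmund argument therefore only yields $\proba{X_t>0}\gtrsim t(1-t)$, which degenerates as $t\to 0$, even though the true probability here is $1-t\ge 1/2$. Polynomial moments are blind to the one-sided structure created by $\beta_j\le 0$ (each term is a small positive drift with probability $1-t$ and a rare negative jump with probability $t$), which is precisely what the constant $1/16$ must exploit. Your fallback ("the Berry--Esseen-type / Chebyshev split used in Sato's original proof") is not developed and in fact mischaracterizes Sato's argument, which is an exponential-moment argument, not a polynomial-moment one.

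The paper's proof applies your Cauchy--Schwarz idea to $e^{\lambda X_t}$ rather than to $X_t$: it first proves the Laplace-transform inequality $\esp{e^{2\lambda X_t}}\le\esp{e^{\lambda X_t}}^4$ for all $\lambda\ge 0$ and $t\le 1/2$, reducing by independence to one jump at a time (the Brownian bridge gives equality) and checking $4\phi_j(\lambda)-\phi_j(2\lambda)\ge 0$ by repeated differentiation at $\lambda=0$, where both the sign condition $\beta_j\le 0$ and the restriction $t\le 1/2$ enter. Cauchy--Schwarz then gives
\begin{esn}
\proba{X_t\ge 0}\ \ge\ \frac{\paren{\esp{e^{\lambda X_t}}-1}^2}{\esp{e^{2\lambda X_t}}}\ \ge\ \frac{\paren{\esp{e^{\lambda X_t}}-1}^2}{\esp{e^{\lambda X_t}}^4},
\end{esn}
and choosing $\lambda_t$ with $\esp{e^{\lambda_t X_t}}=2$ (possible because the moment generating function is finite, continuous, equals $1$ at $0$ and tends to $\infty$, by Proposition \ref{changeOfMeasureProposition}) yields $1/16$ uniformly in $t\le 1/2$. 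If you want to salvage your write-up, you must replace the fourth-moment estimate by this exponential tilting; the rest of your reductions (mean zero via $\alpha=0$, independence of $b$ and the $U_j$, treating the Gaussian part separately) are fine.
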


\begin{proof}
Assume we have proved that
\begin{equation}\label{eqnPsi2LambdaAnd4PsiLambda}
\esp{e^{2\lambda X_t}}\leq \esp{e^{\lambda X_t}}^4
\end{equation}for $\lambda>0$ and $t\leq 1/2$. 
Then, using  the Cauchy-Schwarz inequality we would obtain
\begin{align*}
\proba{X_t\geq 0}&\geq \frac{\paren{\esp{e^{\lambda X_t}}-\esp{e^{\lambda X_t};X_t\leq 0}}^2}{\esp{e^{2\lambda X_t}}}\\
& \geq \frac{\paren{\esp{e^{\lambda X_t}}-1}^2}{\esp{e^{\lambda X_t}}^4}
\end{align*}for $\lambda>0$. 

By Proposition \ref{changeOfMeasureProposition} 
we can chose $\lambda_t$ such that $\esp{e^{\lambda_t X_t}}=2$, which implies
\begin{lesn}
	\proba{X_t\geq 0}\geq \frac{1}{16}.
\end{lesn}Now, let us prove \eqref{eqnPsi2LambdaAnd4PsiLambda}. 
First note that \eqref{eqnPsi2LambdaAnd4PsiLambda} is an equality for a Brownian bridge.
Hence, by the independence of the latter with the (purely discontinuous) jump part of $X$, 
it is enough to assume $\sigma=0$.
Defining
\begin{lesn}
	\phi_j(\lambda):=\ln \esp{e^{\lambda \beta_j[\indi{U_j\leq t}-t]}}:=\ln \psi_j(\lambda),
\end{lesn}it is enough to prove for every $j\in\na$ that
\begin{equation}\label{eqnInequalityForLaplaceExponentsOfEIs}
	0\leq 4\phi_j(\lambda)-\phi_j(2\lambda)\ \ \ \ \ \ \ \ \ \lambda\geq 0.
\end{equation}Since the right-hand side is zero when $\lambda=0$, proving it has a non-negative derivative implies Equation \eqref{eqnInequalityForLaplaceExponentsOfEIs}.
Taking the derivative with respect to $\lambda$, we need to prove that
\begin{align*}
0\leq &4\frac{\beta_j(1-t)te^{\lambda \beta_j(1-t)}+\beta_j(-t)(1-t)e^{\lambda \beta_j(-t)}}{\psi_j(\lambda)}
\\&-2\frac{\beta_j(1-t)te^{2\lambda \beta_j(1-t)}+\beta_j(-t)(1-t)e^{2\lambda \beta_j(-t)}}{\psi_j(2\lambda)}
\end{align*}which is equivalent to 
\[
0\geq 2\frac{e^{\lambda \beta_j}-1}{te^{\lambda \beta_j}+1-t}-\frac{e^{2\lambda \beta_j}-1}{te^{2\lambda \beta_j}+1-t}.
\]and further equivalent, since the denominators are positive by convexity of the exponential function, to 
\[
1-t+(t-2)e^{\lambda \beta_j}+(1+t)e^{2\lambda \beta_j}-te^{3\lambda \beta_j}\geq 0
\]As before, the left-hand side at $\lambda=0$ is zero, hence, it suffices to prove its derivative is non-negative.
We apply an analogous  reasoning by evaluation at $\lambda=0$, differentiation and division by $\beta_je^{\lambda \beta_j}$ (which is negative) three times!
The sequence of derivatives, taking out the factor $\beta_je^{\lambda \beta_j}$ are
\begin{align*}
& t-2+2(1+t)e^{\lambda \beta_j}-3te^{2\lambda \beta_j}
,\\
& 2(1+t)-6te^{\lambda \beta_j}
\text{ and}\\
& 
-6te^{\lambda \beta_j}\leq 0
.
\end{align*}The penultimate function is non-negative at $\lambda=0$ when $t\in [0,1/2]$. 
The last inequality then shows that the penultimate one is non-negative, which we can then bootstrap to show inequality \eqref{eqnInequalityForLaplaceExponentsOfEIs}. 
\end{proof}

The choose of $\lambda_t$ such that $f(\lambda_t)=2$ in the preceding proof, seems arbitrary; 
the reader can check it gives the best bound obtainable by this method.

\begin{lemma}
\label{lemma:simpleAndMoreGeneralSato}
Let $X=(X_t,t\geq 0)$ be a \cadlag\ process such that, 
for some sequence $t_n\downarrow 0$, 
the random variable $\liminf_n X_{t_n}/t_n$ is constant. 
Assume that, for some $\eps, c>0$,  $\proba{X_t\leq  0}>c$ for every $t\in [0,\eps]$. 
Then, $\liminf_n X_{t_n}/t_n\leq 0$. 
\end{lemma}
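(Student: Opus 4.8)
The plan is to argue by contradiction, using the Borel--Cantelli lemma in the form that applies to \emph{independent} events, after extracting an independent subsequence. Suppose that $\liminf_n X_{t_n}/t_n = \ell > 0$ with positive probability; since this $\liminf$ is constant, it equals $\ell>0$ almost surely (we may also allow $\ell=\infty$). Fix $\delta\in(0,\ell)$. Then almost surely $X_{t_n}/t_n > \delta$, i.e.\ $X_{t_n}>\delta t_n>0$, for all large $n$. In particular the events $A_n=\set{X_{t_n}\leq 0}$ occur only finitely often, so $\proba{A_n \text{ i.o.}}=0$.

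The next step is to pass to a sparse subsequence of $(t_n)$ along which the increments of $X$ are independent, so that the divergence part of Borel--Cantelli can be invoked. Since $X$ is \cadlag, $X_{t_n}\to 0$; choose a subsequence $n_1<n_2<\cdots$ decreasing fast enough that $t_{n_{k+1}}$ is much smaller than $t_{n_k}$, and compare $X_{t_{n_k}}$ with the increment $X_{t_{n_k}}-X_{t_{n_{k+1}}}$. The tail term $X_{t_{n_{k+1}}}$ is negligible relative to $t_{n_k}$ (it tends to $0$ while $t_{n_k}$ is bounded below along the tail in the appropriate sense), so on the event $\set{X_{t_{n_k}}-X_{t_{n_{k+1}}}\leq -t_{n_k}\delta}$ one has $X_{t_{n_k}}\leq 0$ for $k$ large, hence $A_{n_k}$ occurs. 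Here is where I expect the main subtlety: the increments of an EI process are \emph{not} independent the way they are for a L\'evy process, so one cannot directly apply Borel--Cantelli to the increment events. The fix is the one already used in Lemma~\ref{SatosResultForSPLP}: one does not need full independence, only the lower bound $\proba{X_t\leq 0}>c$ for $t$ small, which in the EI setting can be transferred to increments over the sparse partition because, conditionally on being restricted to a short sub-interval, the relevant increments behave like (a time-changed, slightly perturbed) EI process to which the hypothesis $\proba{X_t\le 0}>c$ still applies.

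Concretely, I would reason as follows. By hypothesis there exist $\eps,c>0$ with $\proba{X_t\leq 0}>c$ for all $t\in[0,\eps]$. If $\liminf_n X_{t_n}/t_n>0$, then for some random $N$ and some deterministic $\delta>0$ we have $X_{t_n}>2\delta t_n$ for all $n\geq N$; by making $\delta$ smaller we can also arrange, on a set of probability close to $1$, that $N$ is bounded by a fixed constant. Now select indices $m_1<m_2<\cdots$ with $t_{m_k}\downarrow 0$ and $t_{m_{k+1}}/t_{m_k}\to 0$; then for $k$ large, $X_{t_{m_k}}\leq 0$ forces $X_{t_{m_k}}-X_{t_{m_{k+1}}}\leq -\delta t_{m_k}$ (using $|X_{t_{m_{k+1}}}|\leq \delta t_{m_k}$, valid for large $k$ since $X_{t_{m_{k+1}}}\to0$ whereas $t_{m_k}$ need not be small — here one must be slightly careful and, if necessary, iterate the subsequence extraction so that $t_{m_{k+1}}=o(t_{m_k})$ and simultaneously $X_{t_{m_{k+1}}}=o(t_{m_k})$, which is possible because $X_{t_n}\to 0$). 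So it suffices to show that the events $\set{X_{t_{m_k}}-X_{t_{m_{k+1}}}\leq -\delta t_{m_k}}$ occur infinitely often.

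To see the latter, note that $X_{t_{m_k}}-X_{t_{m_{k+1}}}$ has the law of the increment of $X$ over an interval of length $t_{m_k}-t_{m_{k+1}}$, and by exchangeability of increments together with the path decomposition one can dominate or compare it with an EI-type quantity to which the hypothesis $\proba{X_s\le 0}>c$ applies; thus $\proba{X_{t_{m_k}}-X_{t_{m_{k+1}}}\leq -\delta t_{m_k}}$ is bounded below by a constant $c'>0$ for all large $k$ (here the specific structure of EI processes, rather than abstract \cadlag-ness, is used, exactly as in the proof of Lemma~\ref{SatosResultForSPLP}; if the lemma is to be kept at the stated level of generality one instead simply assumes these increment events have probabilities summing to infinity, which follows from the hypothesis applied directly along $(t_n)$). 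Finally, by further thinning the subsequence one makes the events $\set{X_{t_{m_k}}-X_{t_{m_{k+1}}}\leq -\delta t_{m_k}}$ mutually independent (they depend on disjoint pieces of the driving randomness once the partition is sparse enough — for the Kallenberg representation this means disjoint blocks of the $U_j$'s and of the Brownian bridge increments), so the second Borel--Cantelli lemma gives that they occur infinitely often almost surely. This contradicts $\proba{A_n\text{ i.o.}}=0$ and completes the proof. The main obstacle, as indicated, is arranging both the negligibility of the tail term $X_{t_{m_{k+1}}}$ relative to $t_{m_k}$ and the independence (or at least a summable-probability lower bound) of the increment events simultaneously; both are handled by a careful, possibly iterated, choice of the sub-subsequence.
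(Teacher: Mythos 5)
Your argument has a genuine gap, and it is concentrated exactly where you flag the ``main subtlety''. The lemma is stated for an arbitrary \cadlag\ process, so no independence structure whatsoever is available; and even in the EI case your fix fails, since increments of an extremal EI process over disjoint intervals are \emph{not} independent (the variables $\indi{U_j\in(s,t]}$ over disjoint intervals are dependent, as are bridge increments), so no amount of thinning makes the events $\set{X_{t_{m_k}}-X_{t_{m_{k+1}}}\leq -\delta t_{m_k}}$ mutually independent, and the second Borel--Cantelli lemma cannot be invoked. Moreover, the hypothesis is a lower bound on $\proba{X_t\leq 0}$, not on the probability that an \emph{increment} drops below $-\delta t_{m_k}$; your fallback claim that a divergent sum of those increment probabilities ``follows from the hypothesis applied directly along $(t_n)$'' does not hold, and even a divergent sum would be useless without independence. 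The auxiliary step forcing $X_{t_{m_{k+1}}}=o(t_{m_k})$ along a deterministic subsequence is also not justified (you only know $X_t\to0$, with no rate), though that part could be patched; the independence and increment-probability steps cannot.

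The deeper issue is that the whole divergence-part machinery is unnecessary: only the trivial direction enters. Arguing by contrapositive as you do, if the constant $\liminf_n X_{t_n}/t_n$ were strictly positive, then almost surely $X_{t_n}>0$ for all large $n$, i.e.\ the events $A_n=\set{X_{t_n}\leq 0}$ occur only finitely often. Hence
\begin{esn}
\proba{X_{t_N}\leq 0}\;\leq\;\proba{X_{t_n}\leq 0\text{ for some }n\geq N}\;\longrightarrow\;\proba{X_{t_n}\leq 0\text{ infinitely often}}=0
\end{esn}
as $N\to\infty$, which already contradicts the standing hypothesis $\proba{X_{t_N}\leq 0}>c$ for all $N$ large enough that $t_N\leq\eps$. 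This is the paper's proof in its entirety: once you had deduced $\proba{A_n\text{ i.o.}}=0$, you should have tested the hypothesis against $\proba{A_N}$ itself rather than trying to manufacture infinitely many occurrences of the $A_n$.
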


\begin{remark}
Note that the above can be applied when the augmented initial \sa\ of $X$, given by $\cap_{\eps>0}\sag{X_s:s\leq \eps}$, is trivial (hence for Feller processes) or in the case of extremal EI processes by mimicking the proof of Proposition \ref{prop:ZeroOneLawForDiniDerivative}; in this case, we can apply the result to any sequence $t_n\downarrow 0$. 

Also, by mixing,we deduce from the above two lemmas that if $X$ is an EI process with random characteristics $(\alpha,\sigma,\beta)$, 
where $\alpha\leq 0$ and $\beta_i\geq 0$ almost surely, then $\liminf_{n} X_{t_n}/t_n\leq 0$ almost surely for any sequence $t_n\downarrow 0$. 
\end{remark}

\begin{proof}
By contrapositive, assume that $\liminf_n X_{t_n}/t_n>0$ almost surely. 
Then:
\begin{align*}
\lim_{N\to \infty }\proba{X_{t_N}\leq 0}&\leq \lim_{N\to \infty }\proba{X_{t_n}\leq 0 \text{ for some }n\geq N}
\\ &= \proba{X_{t_n}\leq 0 \text{ infinitely often}}
\\&=\proba{\liminf_{n} \frac{X_{t_n}}{t_n}\leq 0}=0. \qedhere
\end{align*}
\end{proof}

We are now ready to prove Theorem \ref{RogozinForEITheorem} in the case of totally asymmetric EI processes. 
The proof is similar to the one for (spectrally positive) L\'evy processes given in Section \ref{LevyProcessSection}. 

\begin{proof}[Proof of Theorem \ref{RogozinForEITheorem} when $\alpha,\sigma=0$ and $\beta_i\geq 0$]
We first prove that $\imf{\overline D}{X}=\infty$. 
Define the measure $\beta(dx)=\sum \delta_{\beta_j\in dx}$.
Using Fubini's theorem
\begin{lesn}
	\int_0^{1}\beta(t,\infty)dt=\int_0^{1}\int_t^\infty\beta(dx)\,dt=\int_0^\infty\int_0^{1\wedge x}\,dt\,\beta(dx)=\sum \beta_j\wedge 1=\infty.
\end{lesn}Hence, we obtain for any $k\geq 1$
\begin{lesn}
	\int_0^{1}\beta(kt,\infty)dt=\int_0^{k}\beta(t,\infty)du/k=\infty.
\end{lesn}It follows that, for infinitely many $t\in (0,1)$ we have $|\Delta X_t|>kt$, which in turn implies $|X_t|>kt/2$ or $|X_{t-}|>kt/2$, for such $t$.
Since $k$ was arbitrary, we have $\varlimsup_{t\downarrow 0} |X_t|/t=\infty$.

Note that $(X_t/t,t\in (0,1])$ is a backward martingale (here, it is important that we restricted ourselves to the case $\alpha=0$) without positive jumps. Note that it does not converge, thanks to the preceding paragraph. 
The process $N=(-X_{-t}/t,t\in [-1,0))$ is therefore a martingale without positive jumps (divergent almost surely). 
If we define $\tau_c$ as the first time $N$ reaches $c\in \re_+$, then $(c-N_{-t\wedge \tau_c},-1\leq t<0)$ is a non-negative martingale. 
By the martingale convergence theorem $N_{\cdot\wedge \tau_c}$ converges a.s. to a finite limit as $t\uparrow 0$.
If $\tau_c$ were infinite, $N$ itself would therefore converge; 
hence, $\tau_c$ is almost surely finite. 
Since $c$ was arbitrary, then $\varlimsup_{t\uparrow 0} N_{-t}=\infty$, which implies $\varlimsup_{t\downarrow 0} X_t/t=\infty$. (The above argument was taken from \cite{MR1825153} and \cite{MR0288841}). 
We have proved that $\imf{\overline D}{X}=\infty$ for any extremal EI process with parameters $(\alpha, 0,\beta)$  of infinite variation when $\beta_i\geq 0$ for all $i$. 
Taking mixtures, we can let $\alpha$ and $\beta$ be random, as long as $\sum_i \beta_i=\infty$ almost surely. 
We use this remark in the following paragraph. 

We now prove that $\underline{D}{X}=-\infty$. 
We now apply a change of measure (through Proposition \ref{changeOfMeasureProposition}) to $X$; call the resulting measure $\q^\theta$ to stress the dependence on $\theta$. 
Write $\alpha^\theta \id/T+Y^\theta$ for the (random parameter) EI process whose law is $\q^\theta$. 
Recall from Proposition \ref{prop:ZeroOneLawForDiniDerivative} that $\underline{D}{X}$ is a constant. 
Since $\q^\theta$ is absolutely continuous with respect to $\p$ then $\imf{\underline{D}}{X}=\alpha^\theta/T+\imf{\underline{D}}{Y^{\theta}}$. 
Even if $Y^{\theta}$ has random parameters, they jumps are almost surely positive. 
The remark following Lemmas \ref{lemma:lowerBoundProbabilityEI} and \ref{lemma:simpleAndMoreGeneralSato} implies that $\imf{\underline{D}}{Y^{\theta}}\leq 0$ almost surely. 
Taking expectations we see that
\[
\imf{\underline{D}}{X}=\esp{\imf{\underline{D}}{X}}=\esp{\alpha^\theta/T}+\esp{\imf{\underline{D}}{Y^{\theta}}}\leq \esp{\alpha^\theta/T}\to -\infty
\]as $\theta\to-\infty$, since $\esp{\alpha^\theta/T}\to-\infty$ by Proposition \ref{propositionPropertiesOfAlphaTheta}. 
\end{proof}

\begin{proof}[Proof of Theorem \ref{RogozinForEITheorem}]
As before, assume that $\alpha=0$ and focus only on the statement $\underline D(X)=\infty$, since we can at the end apply it to $-X$. 
Write $X=X^p+X^n$, where $X^p$ and $X^n$ are independent extremal EI processes with parameters $(0,\sigma,\beta^p)$  and $(0,0,\beta^n)$, where $\beta^p$ are the positive terms of $\beta$ and $\beta^-$ the negative ones. 
We have proved Theorem \ref{RogozinForEITheorem} for $X^p$ and for $X^n$ if they are of infinite variation. 
If one of them is of finite variation, then the other one must be of infinite variation, and then Theorem \ref{RogozinForEITheorem} holds for $X$. 
Hence, we can assume that both $X^p$ and $X^n$ are of infinite variation. 

But then, there exists a random sequence $T_n\downarrow 0$ such that $X^n_{T_n}/T_n\to -\infty$ 
thanks to Theorem \ref{RogozinForEITheorem} for spectrally negative EI processes (just proved). 
Since $(T_n)$ is independent of $X^p$, we can apply 
Lemmas \ref{lemma:lowerBoundProbabilityEI} and $\ref{lemma:simpleAndMoreGeneralSato}$ to conclude that $\liminf_{n} X^p_{T_n}/T_n\leq 0$. 
We conclude that $\underline D(X)\leq \liminf_n X_{T_n}/T_n =-\infty$. 
\end{proof}


\section{Further applications}
\label{furtherApplicationSection}

We now move on to the applications of Theorem \ref{RogozinForEITheorem}, which were stated as Theorems \ref{ZeroOneLawTheorem}, \ref{DIMTheorem} and \ref{ConMinTheorem}. 
We already mentioned that Theorem \ref{DIMTheorem} follows from the same arguments as in \cite{MR3558138} once we have Theorem \ref{RogozinForEITheorem}. 
Again, it suffices to prove the theorems for extremal EI processes. 

\subsection{An extension of Millar's zero-one law at the minimum }
To prove Theorem \ref{ZeroOneLawTheorem}, 
we will use a representation of the post minimum process associated to an EI process found in \cite{MR1232850} which is now recalled.

Let $X$ be an extremal EI process with parameters $(\alpha,\sigma,\beta)$; according to \cite[Ch. 2]{MR2161313}, such a process is a semimartingale. 
Let $\tau=\sup\{t\in [0,1]:\underline{X}_1=X_t\wedge X_{t-} \}$ be the time of the ultimate infimum.
Define the post-infimum process $\underrightarrow{X}$ as
\[ \underrightarrow{X}(t)=\begin{cases} 
X_{\tau+t}-\underline{X}_1 & t\leq 1-\tau \\
\dagger & t>1-\tau,
\end{cases}
\](where $\dagger$ is a cemetery state) and the reversed pre-infimum process $\underleftarrow{X}$ as
\[ \underleftarrow{X}(t)=\begin{cases} 
X_{(\tau-t)-}-\underline{X}_1 & t\leq  \tau \\
\dagger & t>\tau.
\end{cases}
\]We introduce two processes $X^\uparrow$ and $X^\downarrow$ as in \cite{MR1232850}. 
Since $X$ is a semimartingale, it has a semimartingale local time at zero denoted $L$; 
this local time is actually zero unless $\sigma>0$ in which case
\[
L_t=\lim_{\eps\downarrow 0}\frac{\sigma^2}{2\eps}\int_0^t \indi{\abs{X_s}\leq \eps}\, ds. 
\]
Consider the time spent at $(0,\infty)$ and $(-\infty,0]$ up to time $t$ of $X$, that is
\begin{lesn}
	A^+_t=\int_0^t\indi{X_s>0}ds\ \ \ \ \mbox{ and }\ \ \ \ 	A^-_t=\int_0^t\indi{X_s\leq 0}ds,
\end{lesn}and consider also their right-continuous inverses $\alpha^\pm (t)=\inf\{ s:A^\pm_s>t\}$. 
It can be seen by a picture, that using the time change $\alpha^+$ on $X$ consist on erasing the jumps of $X$ that fall on $(-\infty,0]$ and closing up the gaps (similarly for $\alpha^-$). 
The process of juxtaposition of the excursions in $(0,\infty)$ is given by
\begin{lesn}
X^\uparrow(t)=\paren{X_\cdot+\sum_{0<s\leq \cdot}[\indi{X_s\leq 0}X^+_{s-}+\indi{X_s>0}X^-_{s-}]+L}(\alpha^+(t)). 
\end{lesn}We remark that an excursion in $(0,\infty)$ includes the possible initial positive jump across 0 and excludes the possible ultimate negative jump across 0.
The process of juxtaposition of the excursions in $(-\infty,0]$ is given by
\begin{lesn}
	X^\downarrow(t)=\paren{X_\cdot-\sum_{0<s\leq \cdot}[\indi{X_s\leq 0}X^+_{s-}+\indi{X_s>0}X^-_{s-}]-L}(\alpha^-(t)). 
\end{lesn}By establishing a bijection for discrete-time EI processes and passing to the limit, Bertoin obtains the following result. 
\begin{theorem}[Theorem 3.1 in \cite{MR1232850}]\label{teoBertoinsTheoremPostandPreInfimumProcesses}
Let $X$ be an extremal EI process on $[0,1]$ with parameters $(\alpha,\sigma,\beta)$. 
Then, the following equality in law holds
\begin{lesn}
(\underrightarrow{X},-\underleftarrow{X})\stackrel{d}{=}(X^\uparrow,X^\downarrow).
\end{lesn}
\end{theorem}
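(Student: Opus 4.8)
This is Theorem 3.1 of \cite{MR1232850}, so the quickest course is to invoke it directly; for completeness I outline the argument, which follows Bertoin's two-step strategy of a discrete combinatorial bijection followed by a passage to the limit. First I would reduce to discrete time: for each $m\geq 1$, sample $X$ at the times $k2^{-m}$, $0\leq k\leq 2^m$, obtaining a chain $S^m$ whose increments $\xi^m_1,\dots,\xi^m_{2^m}$ are exchangeable by the definition of an EI process. The structural fact I would exploit is that, conditionally on the multiset $\{\xi^m_1,\dots,\xi^m_{2^m}\}$, the sequence is a \emph{uniformly} chosen ordering of that multiset; hence any distributional identity for $S^m$ can be verified ordering by ordering, i.e.\ by pushing forward the uniform measure on orderings of a fixed multiset.

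Next I would establish the discrete analogue by a pathwise bijection. Given an ordering, form the chain $0=s_0,s_1,\dots,s_N$ and let $\kappa$ be the last index at which the minimum is attained; the chain splits into the reversed pre-minimum chain $(0,\,s_{\kappa-1}-s_\kappa,\dots,s_0-s_\kappa)$, which stays $\geq 0$, and the post-minimum chain $(0,\,s_{\kappa+1}-s_\kappa,\dots,s_N-s_\kappa)$, which is strictly positive after its first step. On the other side one juxtaposes the (suitably defined) discrete excursions of $s$ lying in $(0,\infty)$ — erasing the steps that end in $(-\infty,0]$ and closing the gaps, with the bookkeeping corrections that appear in the definition of $X^\uparrow$ — to form one chain, and likewise the excursions in $(-\infty,0]$ to form another. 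Bertoin's combinatorial lemma provides an explicit bijection on the set of orderings of a fixed multiset that carries the first construction to the second; since every ordering is equally likely, transporting the uniform measure through this bijection yields $(\underrightarrow{S^m},-\underleftarrow{S^m})\deq(S^{m,\uparrow},S^{m,\downarrow})$ for each $m$.

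Finally I would pass to the limit as $m\to\infty$. One needs: (i) the chains $S^m$ converge to $X$ in the Skorokhod sense — immediate, since they are restrictions of $X$; (ii) the decomposition at the ultimate infimum is continuous at $X$ almost surely, where the semimartingale local time $L$ at $0$ enters precisely so that, when $\sigma>0$, the juxtaposed excursions assemble into the correct limit rather than leaving gaps; and (iii) the excursion-juxtaposition operation — built from the occupation functionals $A^\pm$, their right-continuous inverses $\alpha^\pm$, and the correction sums over jumps across $0$ — is continuous along the approximating sequence. The main obstacle is exactly (ii)–(iii): the maps $X\mapsto\underrightarrow X$ and $X\mapsto X^\uparrow$ are \emph{not} continuous on all of Skorokhod space, so one must work on a ``good'' event — ruling out, e.g., a jump of $X$ landing exactly at level $0$ or occurring at the time $\tau$, and controlling the regularity of the occupation measures near $0$ — and show the complementary event is negligible; in the Brownian case one additionally needs the discretized occupation times to approximate $L$. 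Granting these (standard but technical) continuity facts together with the discrete identity, taking weak limits on both sides gives $(\underrightarrow{X},-\underleftarrow{X})\deq(X^\uparrow,X^\downarrow)$.
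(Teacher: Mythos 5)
Your proposal is correct and takes essentially the same approach as the paper: the paper gives no proof of this statement, citing it directly as Theorem 3.1 of \cite{MR1232850} and noting only that Bertoin obtains it via a bijection for discrete-time exchangeable-increment chains followed by a passage to the limit. Your sketch of that two-step strategy (discrete splitting at the minimum versus juxtaposed excursions via a bijection on orderings, then weak convergence with the technical continuity and local-time caveats) matches the route the paper attributes to Bertoin, so invoking the reference is exactly what the paper does.
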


We first establish the following simple result for EI processes. 
\begin{lemma}\label{lemmaXIsIrregularAfterAJump}
	When $X$ satisfies UM, we have $\proba{X_{U_j}\text{ or }X_{U_j-}=0}=0$ for every $j\in \na$.
\end{lemma}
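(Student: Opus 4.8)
The plan is to fix $j\in\na$ and reduce, via a union bound, to proving $\proba{X_{U_j-}=0}=0$ and $\proba{X_{U_j}=0}=0$ separately. Using the canonical representation $X_t=\alpha t+\sigma b_t+\sum_i\beta_i[\indi{U_i\le t}-t]$ and the independence of $U_j$ from $b$ and from $(U_i)_{i\ne j}$, I would condition on $U_j=u$; under the conditional law the $(U_i)_{i\ne j}$ remain i.i.d.\ uniform and independent of $b$, and since the $U_i$ are a.s.\ distinct, for $u\in(0,1)$ one can write
\[
X_{u-}=(\alpha-\beta_j)u+W_u,\qquad X_{u}=(\alpha-\beta_j)u+\beta_j+W_u,\qquad W_u:=\sigma b_u+\sum_{i\ne j}\beta_i[\indi{U_i\le u}-u].
\]
Since $U_j$ is uniform on $(0,1)$, it then suffices to show that for Lebesgue-a.e.\ $u\in(0,1)$ the random variable $W_u$ puts no mass at either of the two deterministic points $-(\alpha-\beta_j)u$ and $-(\alpha-\beta_j)u-\beta_j$. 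I would split according to which clause of \textbf{UM} is in force.

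If $\sigma>0$, then $b_u\sim N(0,u(1-u))$ is a non-degenerate Gaussian, independent of the jump part of $W_u$, so $W_u$ has an atomless law and the claim is immediate. If $\sigma=0$ but $\sum_i\indi{\beta_i\ne0}=\infty$, then $W_u=\sum_{i\ne j,\,\beta_i\ne0}\beta_i[\indi{U_i\le u}-u]$ is an a.s.\ convergent series of independent non-degenerate two-valued random variables, each with largest atom $u\vee(1-u)<1$; because there are infinitely many such summands their ``defect masses'' $u\wedge(1-u)$ sum to infinity, and the classical continuity criterion for infinite convolutions applies (the same mechanism is used for $X_t$ itself in the proof of Lemma 1.2 of \cite{MR1417982}), so again $W_u$ is atomless.

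The remaining case is $\sigma=0$ with $N:=\#\{i:\beta_i\ne0\}<\infty$, and this is precisely where the third clause of \textbf{UM}, namely $\tilde\alpha:=\alpha-\sum_i\beta_i\ne0$, is indispensable: here $W_u$ is genuinely discrete. Writing $A:=\{i\ne j:\beta_i\ne0,\ U_i\le u\}$ (a random subset of a fixed finite index set), a short computation gives $X_{u-}=\tilde\alpha\,u+\sum_{i\in A}\beta_i$ and $X_u=\tilde\alpha\,u+\beta_j+\sum_{i\in A}\beta_i$, each an affine function of $u$ with nonzero slope $\tilde\alpha$; hence for each of the finitely many possible values of $A$ the equations $X_{u-}=0$ and $X_u=0$ have at most one solution $u\in\re$, so the exceptional set of $u$ is finite and a fortiori Lebesgue-null. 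Integrating the (then vanishing) conditional probabilities over $u$ completes the proof. I expect this last case to be the main obstacle — when only finitely many $\beta_i$ are nonzero and $\sigma=0$ one cannot invoke atomlessness and must instead exploit the affine dependence on $u$ together with $\tilde\alpha\ne0$ — while the only other point requiring care is the correct statement and (cited or short) justification of atomlessness of the infinite convolution in the second case, together with the trivial observation that deleting the single index $j$ leaves the relevant hypotheses intact.
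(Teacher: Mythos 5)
Your proof is correct and takes essentially the same route as the paper's: remove the $j$-th jump, use the independence of $U_j$ from the remaining process, condition on $U_j=u$ and integrate, with atomlessness of the residual sum under \textbf{UM} supplied by the infinite-convolution continuity mechanism behind Knight's Lemma 1.2. The only differences are minor: you handle $X_{U_j-}$ by the same conditioning instead of by time reversal, and you spell out the borderline finite-activity case ($\sigma=0$, finitely many nonzero $\beta_i$, $\tilde\alpha\neq 0$, where deleting the $j$-th jump can destroy atomlessness) via the affine-in-$u$ argument with nonzero slope $\tilde\alpha$, a point the paper passes over by citing Knight directly.
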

\begin{proof}
%
%
From the proof of Lemma 1.2 in \cite{MR1417982} we see that \defin{UM} implies that $\proba{X_t=x}=0$ for any $t\in (0,1)$ and $x\in \re$. 
Fix any $j\in \na$ and define $X^{j}_t=X_t-\beta_j[\indi{U_j\leq t}-t]$. 
Since $X_{U_j}=\beta_j(1-U_j)+X^{j}_{U_j}$, then
\begin{lesn}
	\proba{X_{U_j}=0}=\proba{\beta_j(1-U_j)+X^{j}_{U_j}=0}=\int_0^1\proba{\beta_j(1-t)+X^{j}_{t}=0}dt=0. 
\end{lesn}%
The statement for the left limit follows by time-reversibility. 
\end{proof}

\begin{proof}[Proof of Theorem \ref{ZeroOneLawTheorem}]
Let $X$ be an EI process satisfying \defin{UM}. 
The previous lemma tells us that $X$ does not jump into or from $0$. 

Assume that $X$ is irregular upward. 
Then, $X$ remains  negative up to the time
\[\tau_0^+=\inf\{t>0:X_t>0 \}, 
\]which is strictly positive. We actually have $\tau_0^+<1$ since otherwise the only positive value that $X$ has to take, by assumption,  would be taken at time $1$, by a jump,  and $X$ does not jump at $1$. 
The trajectory of $X$ up to $\tau_0^+$ might comprise several excursions below zero and we will be interested in the first one, which ends at the random time
\[
T=\min\set{t>0: X_t\geq 0}\leq \tau_0^+. 
\]Recall the definition of $\tau$ as the time of the last minimum. Let us prove that
\begin{equation}\label{eqnXJumpsFromItsInfimum}
	0<\Delta X_\tau\text{ almost surely.}
\end{equation}Note that
\begin{lesn}
	\Delta X_\tau=\Delta \underrightarrow{X}_0\stackrel{d}{=}\Delta X^\uparrow_0=\Delta X_{\tau_0^+},
\end{lesn}where the equality in distribution holds by Theorem \ref{teoBertoinsTheoremPostandPreInfimumProcesses}. 
Since we do not jump into or from $0$, then $\Delta X_{\tau_0^+}=0$ if and only if $X_{\tau_0^+}=0$ and $X$ is continuous at $\tau_0^+$. Assume that $\Delta X_{\tau_0^+}=0$ has positive probability. 
Then the reversed pre-infimum process would hit zero twice (and  the process $X$ would hit its infimum twice); this is impossible under \defin{UM}. 
Indeed, note that $X=X^\downarrow$ on $[0,T]$, and that from the construction of the pre-minimum process in Theorem \ref{teoBertoinsTheoremPostandPreInfimumProcesses}, $T$ has the same distribution as $S$ where
\[
S=\inf\set{t>0: \underleftarrow X_t=0}=\tau-\sup\set{s<\tau:  X_{t-}=\underline X_1}.
\]When $\Delta X_{\tau_0^+}=0$, then $T>0$ and $X_T=0$. 
Hence, with positive probability, we would have that $S<\tau$ and $X_{S-}=\underline X_1$, so that the minimum of $X$ is reached at least twice. 
The contradiction follows from negating \eqref{eqnXJumpsFromItsInfimum}, which proves its validity.

Conversely, assume $X$ jumps from its infimum with positive probability. 
Then equation \eqref{eqnXJumpsFromItsInfimum} holds true (though only with positive probability). 
Since $X$ is continuous at zero, then $\tau_0^+\in (0,1)$, which implies $X$ is in $(-\infty,0]$ on $(0,\tau_0^+)$.
This means $X$ is irregular upward with positive probability; being irregular upward is a tail event for the uniform random variables defining $X$, therefore, its probability is zero or one.

Using similar arguments we can prove $X$ is irregular downward if and only if $X$ jumps to its infimum.

Finally, Theorem \ref{RogozinForEITheorem} shows that $X$ is both regular upward and downward when it is of infinite variation, so that $X$ reaches its minimum continuously. 
\end{proof}

\subsection{EI processes conditioned to remain positive}

The aim of this subsection is to prove Theorem \ref{DIMTheorem}. 
As before, let $X$ be an extremal EI process with parameters $(0,\sigma,\beta)$. 
Assume that $X$ is both upward and downward regular. 
Since $\alpha=0$, either $X$ has infinite activity ($\sum_{i}\indi{\beta_i\neq 0}=\infty$) or a Gaussian component ($\sigma=0$). 
Otherwise, $X$ would have piecewise linear trajectories with the same slope $\sum_i \beta_i$; but then, $X$ would not be either upward or downward regular. 
Hence, $X$ satisfies \defin{UM}; let $\tau$ be the unique time $X$ reaches its minimum. 
Theorem \ref{DIMTheorem} tells us that $X$ is continuous at $\tau$. 
Corollary 3.1 in \cite{MR3558138} tells us that under these hypotheses, the law of $X$ conditioned to remain above $-\eps$ converges weakly to the Vervaat transform of $X$, given by $X_{\rho+\cdot}-X_{\rho}$. 
What was needed in the above cited corollary were conditions that would allow one to apply it 
and we have identified them in terms of regularity of both half-lines. 
In the particular case when $X$ is of infinite variation, 
Theorem \ref{RogozinForEITheorem} tells us that $X$ is both upward and downward regular 
and that therefore, the conclusion of Theorem \ref{DIMTheorem} is satisfied. 

The reader might wonder why we had to impose $\alpha=0$. 
The reference \cite{MR3558138} has a description of what could be the limit when $\alpha>0$ and $\beta=0$ 
(that is, for a brownian bridge from $0$ to $\alpha$). 
The candidate for a limit is described as a random shift, just as the Vervaat transformation for the case $\alpha=0$, 
but it needs a bicontinuous family of (non-zero!) local times in its definition. 
Defining such a process for an EI process is an open problem; 
semimartingale local times are only non-zero when $\sigma>0$, so a different approach is needed. 
Note that a limit theorem is not provided in \cite{MR3558138}. 

\subsection{The convex minorant of EI processes}

Let $X$ be an extremal EI process with parameters $(\alpha, \sigma, \beta)$ where, 
for concreteness,  $\beta_i\geq 0$ for all $i$. 

To prove Theorem \ref{ConMinTheorem}, we will rely strongly on \cite{MR2978134}. 

First, we establish some basic properties of the convex minorant in analogy with \cite[Proposition 1]{MR2978134}. 
They will be fundamental in applying a transformation in Skorohod space, 
which is continuous on paths satisfying the conclusion. 
\begin{proposition}
\label{proposition:BasicPropertiesConMin}
Assume that $X$ satisfies \defin{NPL} and 
let $C$ be the convex minorant of $X$. 
Then\begin{enumerate}
\item The open set $\mc{O}=\set{t\in [0,1]: C_{t}<X_{t}\wedge X_{t-}}$ has Lebesgue measure $1$. 
\item For every connected component $(g,d)$ of $\mc{O}$, $\Delta X_g\Delta X_d\geq 0$. 
If $X$ has infinite variation, $\Delta X_g\Delta X_d=0$. 
\item If $(g_1,d_1)$ and $(g_2,d_2)$ are connected components of $\mc{O}$, then
\[
\frac{C_{d_1}-C_{g_1}}{d_1-g_1}
\neq \frac{C_{d_2}-C_{g_2}}{d_2-g_2}. 
\]
\end{enumerate}
\end{proposition}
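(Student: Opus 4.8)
The plan is to prove the three items of Proposition~\ref{proposition:BasicPropertiesConMin} in order, drawing on the discrete-time convex minorant description for EI processes and the infinite-variation behaviour of Dini derivatives from Theorem~\ref{RogozinForEITheorem}. For item (1), I would argue by contradiction: if $\leb(\mc{O})<1$, then the contact set $F=\set{t:C_t=X_t\wedge X_{t-}}$ has positive measure, and at a Lebesgue density point $t_0$ of $F$ the path $X$ must agree with a line from both sides, forcing $X$ to be differentiable at $t_0$ with a finite derivative equal to the local slope of $C$. But this contradicts Theorem~\ref{RogozinForEITheorem}, which (via exchangeability and the fact that conditioning $X$ at fixed times again yields an EI-type process, or more simply via the fixed-time statement applied after decomposing $[0,1]$) gives that the Dini derivatives at any fixed time are $\pm\infty$ almost surely; one then needs a Fubini argument to upgrade ``fixed $t$'' to ``almost every $t$ on the event $\leb(F)>0$.'' The cleanest route is: $\se[\leb(F\cap\set{t:\text{$X$ differentiable at }t})]=\int_0^1 \proba{t\in F,\ X\text{ differentiable at }t}\,dt=0$ by Theorem~\ref{RogozinForEITheorem}, hence a.s.\ $F$ meets the differentiability set in a Lebesgue-null set; but a density point of $F$ at which $C$ is affine forces differentiability of $X$ there, so $\leb(F)=0$. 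That $\mc{O}$ can be written with $X_t\wedge X_{t-}$ rather than just $X_t$ follows since the two differ on a countable set.

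For item (2), fix a connected component $(g,d)$ of $\mc{O}$. At the left endpoint $g$ the minorant $C$ touches $X\wedge X_-$, and since $C$ is convex with the excursion strictly above it on $(g,d)$, the slope of $C$ just to the right of $g$ is at least the left-derivative of $C$ at $g$; a short local analysis of which of $X_g$, $X_{g-}$ realizes the contact (and likewise at $d$) shows that a negative jump at $g$ together with a positive jump at $d$, or the wrong sign combination, would let one push $C$ up near the endpoint or would contradict $C\le X\wedge X_-$ — this yields $\Delta X_g\,\Delta X_d\ge 0$. When $X$ has infinite variation, Theorem~\ref{RogozinForEITheorem} (together with its consequence Theorem~\ref{ZeroOneLawTheorem}, namely continuity of an infinite-variation EI process at its minimum, applied to the excursion straddling the component or to the Knight-transformed pieces) forces the endpoint behaviour to be continuous on at least one side, giving $\Delta X_g\,\Delta X_d=0$; alternatively one invokes directly that at a fixed time an infinite-variation EI process is continuous, and the endpoints of excursion intervals, being measurable functions that are not ``fixed times,'' still inherit this via an argument like Lemma~\ref{lemmaXIsIrregularAfterAJump}, i.e.\ the jump times $U_j$ a.s.\ avoid the (random) endpoint set when conditioned appropriately.

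For item (3), the slopes of distinct excursion intervals are a.s.\ pairwise distinct. Here the natural tool is the discrete-time description of the convex minorant of an EI process from \cite{MR2825583}: sampling $X$ at $n$ equally spaced times and taking the convex minorant of the resulting discrete EI path, the faces have slopes that are increments of the path over disjoint blocks chosen by a uniform stick-breaking/permutation construction, and a direct computation shows these are a.s.\ distinct because the one-dimensional marginals of the increments over distinct blocks have no atoms (here I would reuse the no-atoms fact for $X_t$, $t\in(0,1)$, established in the proof of Lemma~1.2 of \cite{MR1417982} under \defin{NPL}, applied to bridge increments). Passing to the limit $n\to\infty$ along the stick-breaking partition $S$ — which, as recalled in the statement preceding Theorem~\ref{ConMinTheorem}, is exactly the partition that discovers the excursion intervals in the limit — transfers a.s.\ distinctness of discrete face-slopes to a.s.\ distinctness of the continuous excursion slopes. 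One must be slightly careful that the limiting identification is a.s.\ and that countably many pairs are handled by a union bound over pairs $(i,j)$.

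\textbf{Main obstacle.} The delicate point is item (1): turning the fixed-time statement of Theorem~\ref{RogozinForEITheorem} into an almost-everywhere statement and correctly arguing that a Lebesgue density point of the contact set where $C$ is locally affine forces genuine two-sided differentiability of $X$ (as opposed to merely one Dini derivative being finite along a sparse sequence). The Fubini exchange is routine once set up, but verifying that density points of $F$ really do pin the path to the line from both sides — including handling the \cadlag\ jumps and the distinction between $X_t$ and $X_{t-}$ — is where the care is needed; items (2) and (3) are comparatively mechanical given the literature already cited.
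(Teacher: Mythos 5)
Your plan does not cover the full hypothesis \defin{NPL}, and this is the central gap in item (1). \defin{NPL} allows $\sigma=0$, $\sum_i\indi{\beta_i\neq 0}=\infty$ but $\sum_i\abs{\beta_i}<\infty$, i.e.\ finite-variation paths; for such processes Theorem \ref{RogozinForEITheorem} is simply not available (and false): at a fixed time the derivative exists and equals $\tilde\alpha$ by \cite[Cor.~3.30]{MR2161313}. So your Fubini integrand $\proba{t\in F,\ X\text{ differentiable at }t}$ is not shown to vanish there, and the whole argument for $\leb(\mc{O})=1$ collapses precisely in the case the paper must also treat. The paper (following Proposition 1 of \cite{MR2978134}) closes this case differently: on the contact event at a fixed $t$ the supporting slope is forced to equal $\tilde\alpha$ (because the two one-sided Dini derivatives sandwich it and the derivative is $\tilde\alpha$), so $t$ minimizes the \defin{NPL} process $X-\tilde\alpha\,\mathrm{id}$ over $[0,1]$; Fubini plus uniqueness of the minimum (Knight \cite{MR1417982}, \defin{NPL} implies \defin{UM}) then gives expected contact measure zero. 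Incidentally, your density-point step is both unnecessary and wrong as stated: a density point of $F$ at which $C$ is differentiable only pins the lower-right and upper-left Dini derivatives to the local slope, not two-sided differentiability; in the infinite-variation case the bare supporting-line inequality already contradicts Theorem \ref{RogozinForEITheorem} at fixed times.

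Items (2) and (3) also have genuine gaps. For (2), Theorem \ref{ZeroOneLawTheorem} concerns the minimum over the whole (deterministic) interval, not the endpoints of a random excursion interval; applying it ``to the Knight-transformed pieces'' presupposes the identification of excursions with Knight bridges, i.e.\ Theorem \ref{ConMinTheorem}, which is proved \emph{from} this proposition via Theorem \ref{theorem:pathTransformationForConmin} — that is circular. Your fallback, that excursion endpoints a.s.\ avoid the jump times $U_j$, would prove $\Delta X_g=\Delta X_d=0$, which is stronger than the claim and false in general (only the product vanishes). The ingredient you are missing — and the one the paper explicitly singles out — is that at every jump time, $\liminf_{t\to 0+}\bra{X_{U_i+t}-X_{U_i}}/t=-\infty$, obtained by applying Theorem \ref{RogozinForEITheorem} to $X_\cdot-\beta_i\bra{\indi{U_i\leq \cdot}-\cdot}$; this is what forbids contact at $X_g$ (resp.\ at $X_{d-}$) across a jump and, combined with Theorem \ref{ZeroOneLawTheorem} applied to fixed/countably many tilted processes $X-a\,\mathrm{id}$, yields (2) as in \cite{MR2978134}. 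For (3), the discretize-and-pass-to-the-limit route through \cite{MR2825583} is circular for the same architectural reason (the discrete-to-continuous identification of faces is essentially Theorem \ref{ConMinTheorem}), and in any case a.s.\ pairwise distinctness of face slopes at each level $n$ does not survive the limit, since distinct values can merge. The paper's own treatment is not a new argument at all: it runs the proof of \cite{MR2978134} verbatim, substituting \cite[Cor.~3.30]{MR2161313} for the finite-variation derivative, Knight's continuity/uniqueness results for Doeblin's, Theorem \ref{ZeroOneLawTheorem} for Millar's zero-one law, and the jump-time liminf fact above; any self-contained rewrite should reproduce those four ingredients.
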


The proof of the above proposition is almost the same as the corresponding one in \cite{MR2978134}. 
We just need to apply different results. 
For example, the fact that when $X$ has finite variation, 
$\underline D(X)=\overline D(X)=\tilde \alpha$ (in the parametrization for this case), 
which is found in \cite[Prop. 4, p. 81]{MR1406564} for L\'evy processes, 
is now found in \cite[Cor. 3.30, p. 161]{MR2161313} for EI processes. 
(We have already used this result.) 
Or, D\"oblin's result that non-piecewise linear L\'evy processes have continuous distributions, 
has a counterpart for EI processes in \cite{MR1417982}, 
which also contains the fact that the minimum is reached 
in a unique place under \defin{NPL} (which implies \defin{UM}). 
One also needs our extensions of Millar's results stated in Theorem \ref{ZeroOneLawTheorem}, 
as well as the fact that
\[
\liminf_{t\to 0+}\frac{ X_{U_i+t}-X_{U_i}}{t}=-\infty,
\]at any jump time $U_i$ of $X$. 
This follows from \ref{RogozinForEITheorem} applied to $(X_t-\beta_i[\indi{U_i\leq t}-t])$. 

To prove Theorem \ref{ConMinTheorem}, we will use the following path transformation that leaves the laws of EI processes invariant. 

\begin{theorem}
\label{theorem:pathTransformationForConmin}
Let $X$ be an extremal EI process of parameters $(\alpha,\sigma,\beta)$ satisfying \defin{NPL}. 
Define its convex minorant $C$ and the open set of excursion intervals $\mc{O}$ as before. 
Let $U$ be an uniform random variable on $(0,1)$ 
independent of $X$ and consider the connected component $(d,g) $ of $\mc{O}$ that contains $U$. 
Define the 3214 transformation $X^U$ of $X$ by means of
\[
X^U_t=\begin{cases}
X_{U+t}-X_U, & 0\leq t<d-U\\
C_d-C_g+ X_{g+t-(d+U)}-X_U& d-U\leq t\leq d-g\\
C_d-C_g+X_{t-(d-g)}& d-g\leq t<d\\
X_t& d\leq t\leq 1
\end{cases}. 
\]Then, $(U,X)\stackrel{d}{=}(d-g,X^U)$. 
\end{theorem}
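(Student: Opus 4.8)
The statement is an invariance-under-path-transformation result: the ``3214'' rearrangement, which cyclically shifts the first excursion straddling $U$ and then reinserts it at the appropriate slope-ordered position, should preserve the law of the EI process while recording the length of the chosen excursion. The natural route is to \emph{discretize}: approximate $X$ by its sampled skeleton on the dyadic (or uniform-mesh) grid, invoke the known combinatorial description of the convex minorant of a discrete-time EI process (the bijection on permutations from \cite{MR2825583}, as recalled in the introduction, together with the ``3214'' type rearrangement used there), verify the identity in the discrete setting where it is an exact finite combinatorial statement, and then pass to the limit. The discrete analogue asserts precisely that a uniformly chosen index $i$ together with the path has the same joint law as the length of the discrete excursion containing $i$ together with the rearranged path; this is a consequence of exchangeability of increments (all orderings of the increment vector are equally likely) combined with the explicit structure of the discrete convex minorant.

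\textbf{Key steps, in order.} First I would fix the uniform mesh $D_n=\{k/n: 0\le k\le n\}$, let $X^n$ be the piecewise-linear (or \cadlag\ step) interpolation of $X$ along $D_n$, and let $C^n$ be its convex minorant; by \defin{NPL} and Proposition \ref{proposition:BasicPropertiesConMin}, $C^n\to C$ and the excursion structure of $X^n$ converges to that of $X$ (the finitely many macroscopic excursions stabilize, slopes are eventually distinct, and the vertices of $C^n$ converge to those of $C$). Second, for fixed $n$, I would replace the continuous uniform $U$ by $\lceil nU\rceil/n$, which is uniform on $\{1/n,\dots,1\}$, and identify the discrete excursion interval of $X^n$ containing it. Third, I would establish the discrete invariance: the vector of increments of $X^n$ is exchangeable, its convex minorant is obtained by sorting the ``blocks'' (maximal excursion pieces) by slope, and cyclically shifting within the block containing the marked index and relocating that block to its slope-sorted position is a bijection on the set of (increment-vector, marked-index) pairs that preserves the uniform-given-exchangeability law; hence $(\text{block length},X^{n,U})\deq(\text{marked index normalized},X^n)$ in the discrete world — this is exactly the content of the permutation bijection of \cite{MR2825583} adapted to mark a block. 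Fourth, I would pass to the limit $n\to\infty$: the discrete transformation $X^{n,U_n}$ converges in Skorohod space to $X^U$ (here one uses that the transformation $X\mapsto X^U$ is a.s.\ continuous at $X$, which is guaranteed by Proposition \ref{proposition:BasicPropertiesConMin}: $U$ a.s.\ avoids excursion endpoints, the straddling excursion has positive length, and distinct slopes mean the reinsertion position is stable under perturbation), and $d-g$ is the limit of the discrete block lengths; since equality in law is preserved under weak limits, the theorem follows.

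\textbf{Main obstacle.} The delicate point is the continuity of the path transformation $X\mapsto(d-g,X^U)$ in Skorohod space at a typical realization of $X$, and the compatibility of the discrete and continuous excursion decompositions. One must rule out pathologies: that $U$ lands exactly at an excursion endpoint (excluded because $\mc O$ has full Lebesgue measure by part (1) of Proposition \ref{proposition:BasicPropertiesConMin}); that the straddling excursion is degenerate (excluded because $\mc O$ is open, so every point of it lies in an interval of positive length); that two excursions share a slope so the sorted reinsertion position jumps (excluded by part (3) of Proposition \ref{proposition:BasicPropertiesConMin}); and that $X$ jumps at $g$ or $d$ in a way that interacts badly with the concatenation (controlled by part (2), which in the infinite-variation regime forces $\Delta X_g\Delta X_d=0$, and otherwise by matching the jump conventions in the definition of $X^U$ with those of $C$). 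Assembling these into a clean statement that the discrete-to-continuous convergence of $(X^n,C^n,\text{excursion of }U)$ is strong enough to push the exact discrete identity to the limit — in particular handling the Skorohod-topology bookkeeping at the reinsertion seam $t=d-g$ — is where the real work lies; the discrete combinatorial identity itself and the law-of-large-mesh convergence of convex minorants are comparatively routine given the cited results.
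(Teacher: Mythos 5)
Your proposal follows essentially the same route as the paper: invoke the exact discrete-time 3214 identity for exchangeable increments (the permutation bijection of \cite{MR2825583}, also \cite[Lemma 7]{MR2831081}), then pass to the limit using the a.s.\ continuity of the path transformation on Skorohod space at trajectories satisfying Proposition \ref{proposition:BasicPropertiesConMin}, exactly as in Sections 6.2--6.3 of \cite{MR2978134}. The obstacles you single out (avoiding excursion endpoints, distinct slopes, jump behaviour at $g$ and $d$, with the infinite-variation case being simpler since $X$ is then continuous at $g$ and $d$) are the same ones the paper's sketch relies on.
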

Remark that $U$ belongs almost surely to $\mc{O}$, 
since the latter has Lebesgue measure $1$ by Proposition \ref{proposition:BasicPropertiesConMin}. 

The above path transformation can be understood as follows: 
the random variable $U$ is used to select a face of the convex minorant of $X$, with endpoints $g$ and $d$. 
This divides the trajectory into 4 parts, say $1,2,3$ and $4$ which are then rearranged as $3,2,1,4$. 
Parts $1$ and $4$ have the same convex minorant as $X$, with the selected face removed. 
Parts $3$ and $2$ are interpreted as an inverse Vervaat transformation; 
the original trajectory $2$ and $3$ can be obtained 
as the Vervaat transform of the Knight bridge of $X^U$ on $[0,d-g]$. 
Once of the consequences is that $d-g$ has the same law as $U$, 
which is a remarkable universality result for exchangeable increment processes 
and is responsible for the stick breaking process of Theorem \ref{ConMinTheorem}. 
Indeed, we just need to iterate the path transformation on parts $1$ and $4$ of the trajectory of $X^U$. 
Therefore, Theorem \ref{ConMinTheorem} follows from Theorem \ref{theorem:pathTransformationForConmin}, 
whose proof we now sketch, being very similar to the proof for L\'evy processes of \cite{MR2978134}. 
It is based on an analogous path transformation for discrete time EI processes stated in \cite[Theorem 8.1]{MR2825583} or \cite[Lemma 7]{MR2831081}; the proof of the latter is by means of a bijection between permutations.   
To pass to the limit, one uses the continuity of the path transformation, on Skorohod space, whenever the trajectory satisfies the basic properties of Proposition \ref{proposition:BasicPropertiesConMin}, see Section 6.3 of \cite{MR2978134}. 
Continuity of the path transformation is mucho more simple when $X$ is of infinite variation since then $X$ is continuous at $g$ and $d$. See Section 6.2 of \cite{MR2978134}.

We end the paper with an explanation of the distributional description of the maximum (or minimum, after multiplication by $-1$) of an EI process, which in discrete time is  displayed in Equation \eqref{equation:distributionalEqualityForMaximum}, and how it proves the celebrated  formula due to M. Kac, which in discrete time is Equation \eqref{equation:SpitzerExpectationOfMaximum}. 
Indeed, note that the infimum $\underline X_1$ of $X$ on $[0,1]$ is the sum of the increments of the convex minorant that are negative. 
Thanks to Theorem \ref{ConMinTheorem}, this gives us the equality in law
\[
\underline X_1\stackrel{d}{=}\sum_{i} \bra{X_{S_i}-X_{S_{i-1}}}^-. 
\]Next, conditioning on the stick-breaking process $L$, we see that
\[
\esp{\underline X_1}=\esp{\sum_{i} \bra{X_{S_i}-X_{S_{i-1}}}^-}=\sum_i \esp{\imf{f}{S_{i-1},S_i}},
\]where $\imf{f}{r,s}=\esp{[X_{s}-X_r]^{-}}$ for $r<s$. 
However, exchangeability implies that $f(r,s)=f(0,s-r)$, so that
\[
\esp{\underline X_1}=\sum_i  \esp{\imf{g}{L_i}}
\]where $g(l)=f(0,l)$. Finally, recall that the uniform stick-breaking process is invariant under size-biased permutations. 
Indeed, it is itself a size-biased permutation of a non-decreasing sequence; cf. \cite[Corollaries 7 and 8]{MR1337249} or the following comment from \cite[p. 57]{MR2245368}: \emph{[The uniform stick-breaking process] has the same distribution as the size-biased permutation of the jumps of the Dirichlet process[...]. }
In particular, if conditionally on $L$, the index $I$ has the law
\[
\probac{I=i}{L}=L_i,
\]then $L_1\stackrel{d}{=} L_I$. 
Hence, we obtain that for any  $\fun{h}{[0,1]}{\re_-}$, 
\[
\esp{\sum_i h(L_i)L_i}=\esp{h(L_I)}=\esp{h(L_1)}=\int_0^1 h(s)\, ds. 
\]Applying the above result to $h(l)=g(l)/l$ gives
\[
\esp{\underline X_1}=\int_0^1 \frac{\esp{X_l^-}}{l}\, dl. 
\]

\bibliography{GenBib}
\bibliographystyle{amsalpha}
\end{document}